\documentclass[11pt,reqno]{amsart}

\newcommand\version{August 26, 2026}

\usepackage{amsmath,amsfonts,amsthm,amssymb,amsxtra}
\usepackage{stmaryrd}
\usepackage{bbm} 
\usepackage{hyperref}	
\usepackage{mathrsfs}
\usepackage{esint}

\usepackage{xfrac}
\usepackage{todonotes, relsize, pgfplots, tikz, float}
\usetikzlibrary{calc,decorations.markings}
\tikzset{
	> = stealth,
	every pin/.style = {pin edge = {}},
	flow/.style = {decoration = {markings, mark=at position #1 with {\arrow{>}}},
		postaction = {decorate}
	},
	flow/.default = 0.5,
	main/.style = {color=#1, line width=0.5pt, line cap=round, line join=round},
	main/.default = black,
	fontscale/.style={font=\relsize{#1}},
}

\newtheorem{theorem}{Theorem}[section]
\newtheorem{proposition}[theorem]{Proposition}
\newtheorem{lemma}[theorem]{Lemma}
\newtheorem{corollary}[theorem]{Corollary}

\theoremstyle{definition}

\theoremstyle{remark}

\newtheorem{remark}[theorem]{Remark}

\numberwithin{equation}{section}

\numberwithin{equation}{section}

\renewcommand{\epsilon}{\varepsilon}

\newcommand{\N}{\mathbb{N}}

\renewcommand{\phi}{\varphi}
\newcommand{\R}{\mathbb{R}}

\newcommand{\Z}{\mathbb{Z}}

\DeclareMathOperator{\supp}{supp}

\DeclareMathOperator{\M}{\mathbf M}
\DeclareMathOperator{\I}{\mathbf I}
\DeclareMathOperator{\F}{\operatorname{Fill}}
\DeclareMathOperator{\FV}{\operatorname{FV}}
\DeclareMathOperator{\lip}{\operatorname{lip}}

\usepackage{mathtools, todonotes, thmtools}\usepackage{tikz}
\usetikzlibrary{datavisualization}
\usetikzlibrary{datavisualization.formats.functions}
\usetikzlibrary{patterns}

\let\oldtocsection=\tocsection

\let\oldtocsubsection=\tocsubsection

\let\oldtocsubsubsection=\tocsubsubsection

\renewcommand{\tocsection}[2]{\hspace{0em}\oldtocsection{#1}{#2}}
\renewcommand{\tocsubsection}[2]{\hspace{1em}\oldtocsubsection{#1}{#2}}
\renewcommand{\tocsubsubsection}[2]{\hspace{2em}\oldtocsubsubsection{#1}{#2}}

\begin{document}

\begin{titlepage}
    \huge \title[Linear isoperimetric filling inequalities]{Linear isoperimetric filling inequalities in Hadamard spaces at and above the asymptotic rank}
		\vspace{7cm}
\end{titlepage}

\subjclass[2020]{Primary: 51F30. Secondary: 49Q15, 30L15, 53C23.}
\keywords{$\mathrm{CAT}(0)$-spaces, linear filling inequalities, isoperimetric gap, asymptotic rank, asymptotic Nagata dimension, integral currents, Federer--Fleming deformation}
\date{\version}
\thanks{\copyright\, 2026 by the author. This paper may be reproduced, in its entirety, for non-commercial purposes.}

\author{Jonas W. Peteranderl}
\address[Jonas W.~ Peteranderl]{Mathematisches Institut, Ludwig-Maximilians-Universit\"at M\"unchen, The\-resienstr.~39, 80333 M\"unchen, Germany}
\email{peterand@math.lmu.de}

\begin{abstract}
    Reformulated in terms of the asymptotic rank, a conjecture by Gromov predicts a linear isoperimetric filling inequality in all dimensions greater than or equal to the asymptotic rank of a Hadamard space, in contrast to the Euclidean-type nonlinear behavior below this threshold. We prove the predicted linear inequality for Hadamard spaces with finite asymptotic Nagata dimension and finite asymptotic rank. More precisely, every integral cycle of dimension at or above the asymptotic rank admits a filling whose mass is bounded linearly in the mass of the cycle. Our proof is based on a new self-improvement mechanism for Wenger's sub-Euclidean growth theorem. This approach upgrades the asymptotic rank-one result by Wenger and the recent asymptotic rank-two result by Lang, Stadler, and Urech from exponents arbitrarily close to one to the optimal linear exponent. Moreover, the result extends to arbitrary finite asymptotic ranks.
\end{abstract}

\maketitle
\setcounter{page}{1}


\section{Introduction and main results}

Isoperimetric inequalities and their applications arise in a remarkably broad variety of geometric and analytic contexts. They range from classical Euclidean and Riemannian settings \cite{Oss78,Gro83,Yau75} to geometric measure theory and general metric spaces \cite{FF60,Gro83,Wen05}. More recently, isoperimetric-type inequalities also appear in metric measure spaces with synthetic curvature bounds \cite{CM17} and in Lorentzian geometry \cite{BE99,CM25,LP25} with a reversed sign. There are also many classical applications of isoperimetric inequalities in other areas of mathematics such as discrete geometry \cite{Dod84}, spectral geometry \cite{Che70, Bus82}, and geometric group theory \cite{Gro93}, for instance.

  The standard Euclidean isoperimetric inequality gives a lower bound on the boundary area of a domain $\Omega\subseteq \R^{k+1}$ by the $k/(k+1)$-th power of its volume. Similarly, Gromov \cite[\S3.4.C]{Gro83} proved the Euclidean-type isoperimetric filling inequality for cycles in Hadamard manifolds. In terms of the Ambrosio--Kirchheim integral currents \cite{AK00}, it says that every integral current $T$ with $\partial T=0$ has a filling $V$ with
  $$\M(V)\leq C(k) \M(T)^{1+\frac1k}$$ up to some constant $C(k)>0$, where the mass $\M$ denotes the current-theoretic generalization of the volume. So far, the optimizers of this inequality are known to be Euclidean $(k+1)$-balls in a few cases, notably in Euclidean space \cite{Alm86} and $k=2$ \cite{Sch20}.

 More generally, we are concerned with \textit{$\mathrm{CAT}(0)$-spaces}, which are geodesic metric spaces with triangles that are not ``thicker'' than Euclidean comparison triangles. 
A complete $\mathrm{CAT}(0)$-space is also known as \textit{Hadamard space}. A natural question is whether on a Hadamard space $X$ the exponent on the right side of the corresponding isoperimetric inequality is optimal. Gromov's isoperimetric gap conjecture \cite{Gro93} predicts that the Euclidean exponent $1+1/k$ jumps to the linear exponent $1$ when the dimension grows and that this dimensional transition occurs exactly at the ``rank'' of $X$; see also \cite{LW08}. While Gromov suggested several different almost equivalent notions of rank, the unified notion of \textit{asymptotic rank} is nowadays prevailing and supported by several advances in the field, most notably by Kleiner \cite{Kle99} and Wenger \cite{Wen11}.

 Complete answers to the conjecture are known for symmetric spaces of noncompact type in terms of Lipschitz chains by Leuzinger \cite[Theorem~1, (ii)]{Leu14} (see also Gromov's assertion in \cite{Gro93}) and homogeneous Hadamard manifolds in terms of currents by Isleifsson \cite[Theorem 1.1]{Isl25}. On general Hadamard spaces, Wenger \cite{Wen11}
 proved that the Euclidean growth of filling functions is sharp below the asymptotic rank and established sub-Euclidean growth at and above it, though without any quantitative rate.

Wenger \cite[Corollary 1.3]{Wen11} further proved that the asymptotic rank is at most $1$ if and only if $X$ is Gromov hyperbolic. As a consequence, linear growth occurs for $k=1$ (cf.~\cite[p.~106]{Gro87} and \cite{Wen08}). For $k>1$, the growth rate of the filling function was only known to be arbitrarily close to linear; cf.~\cite[Theorem 1.6]{Wen11}. This implies that for a $(k+1)$-filling $V$ of a compactly supported integral $k$-cycle $T$, $k\geq 2$, the $(1+\delta)$-th power, $0<\delta<1/k$, of the mass of $T$ bounds the mass of $V$ from above, which can be written as \begin{equation}\label{eq:delta_isop}
    \M(V)\leq C(\delta)\M(T)^{1+\delta}\,,
\end{equation}
with a $\delta$-dependent constant $C(\delta)=C(\delta,k,X)>0$. In the context of word-hyperbolic groups, higher-dimensional linear isoperimetric inequalities were found earlier; see \cite{Lan00}.

For general proper metric spaces satisfying suitable cone-type inequalities, Goldhirsch--Lang \cite[Theorem~5.3]{GL23} proved linear filling inequalities for cycles with uniformly controlled density and constants depending on this control. For proper $\mathrm{CAT}(0)$-spaces with asymptotic rank at most~$2$, Dru\c{t}u--Lang--Papasoglu--Stadler \cite{DLPS25} proved homotopical isoperimetric inequalities with exponent $1+\delta$ for Lipschitz $2$-spheres and surfaces of higher (fixed) genus. Subsequently, Lang--Stadler--Urech \cite{LSU26} established the corresponding homological estimate \eqref{eq:delta_isop} without properness, but under the condition that the \textit{asymptotic Nagata dimension} is finite to compensate for the missing topological control.

In this work, we show that the exponent $\delta$ can be avoided by discretizing and deforming a filling one dimension higher than the initial cycle. As a byproduct of our approach, the restriction to asymptotic rank at most $2$ can be dropped.

 To state our main results, we fix some notation, which we discuss in Section~\ref{sec:2} and \ref{sec:3} in more detail. In the following, $\I_k(X)$ denotes the integral $k$-currents of finite mass and $\I_{k,c}(X)$ the subgroup with compact support. Further, we write $$\F_X(T)\coloneqq \inf\{\M(V): V\in \I_{k+1}(X),\partial V =T\}$$ for the filling volume of $T\in \I_k(X)$ with $\partial T=0$ and $$\FV_{k+1}^X(r)\coloneqq \sup\{\F_X(T):T\in \I_k(X), \partial T=0,\M(T)\leq r\}$$ for the filling volume function in $r\geq 0$. 
 If $T\in \I_{k,c}(X)$, the infimum in $\F_X(T)$ is attained by a (compactly supported) filling $V$; see Theorem~\ref{thm:wen_ex}.

\begin{theorem}[Linear self-improvement]\label{thm:1} Let $X$ be a complete $\mathrm{CAT}(0)$-space of finite asymptotic Nagata dimension, and let $k\geq 1$. If 
\begin{equation}\label{eq:subEucl}
    \limsup_{r\to \infty} \frac{\FV_{k+1}^X(r)}{r^{\frac{k+1}k}}=0\,, 
\end{equation}then there is a constant $C=C(X,k)>0$ such that every $T\in \I_{k}(X)$ with $\partial T=0$ has a $V\in \I_{k+1}(X)$ satisfying
$$\partial V=T\qquad\text{and}\qquad\M(V)\leq C \M (T)\,.$$
If $T\in \I_{k,c}(X)$, then it is possible to choose $V\in \I_{k+1,c}(X)$.\end{theorem}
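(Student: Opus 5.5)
The plan is to prove a self-improving estimate: sub-Euclidean growth at a given scale should yield linear growth at all scales, via discretization of an optimal filling one dimension higher. Fix $\epsilon>0$ small, to be chosen depending only on $k$ and on the Nagata constants of $X$. By \eqref{eq:subEucl}, there is $r_0$ with $\FV_{k+1}^X(r)\le \epsilon\, r^{(k+1)/k}$ for all $r\ge r_0$. We first treat $T\in\I_{k,c}(X)$ with $\partial T=0$ and then pass to general $T$ by approximation, using Theorem~\ref{thm:wen_ex} and the existence of minimizers. For compactly supported cycles, Theorem~\ref{thm:wen_ex} gives a minimizing filling $V$, and minimality yields the lower density bound $\|V\|(B(x,r))\ge c\, r^{k+1}$ for $x\in\supp V$ at small scales. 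It also yields the upper bound $\|V\|(B(x,r))\le$ (Euclidean-type isoperimetric bound of the slice), obtained from the $\mathrm{CAT}(0)$ cone inequality.

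The key step is a Federer--Fleming type deformation of $V$ relative to a cover at scale $s$. Using finite asymptotic Nagata dimension, we choose an $s$-bounded cover of $X$ with multiplicity at most $n+1$ and Lebesgue number $\gtrsim s$. Via the associated partition of unity we map $X$ to a simplicial complex $\Sigma$ of dimension at most $n$. Following the Lang--Stadler--Urech framework, we push $V$ and $T$ into the $(k+1)$- and $k$-skeleta. Since $\mathrm{CAT}(0)$ spaces are Lipschitz contractible at all scales, we then map back to $X$ by a Lipschitz map defined skeleton by skeleton through geodesic coning. This decomposes
\[
T=\partial R + P,\qquad \M(R)\lesssim s\,\M(T),\quad \M(P)\lesssim \M(T),
\]
where $P$ is a "polyhedral" cycle made of $k$-cells of size $s$. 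The filling of $V$ itself then splits into pieces living in $s$-balls.

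The crucial self-improvement runs as follows. Each $(k+1)$-cell of the pushed-forward $V$ has boundary of mass $\sim s^k$, and the number of such cells is controlled by $\M(V)/s^{k+1}$ together with the density bound. We regroup the cells into clusters whose boundary cycles have mass between $r_0$ and $2r_0$ (times $s^k$), and fill each cluster using the sub-Euclidean hypothesis. Cheap fillings with ratio $\epsilon$ then give an inequality of the form
\[
\F_X(T)\le C_1 s\M(T) + C_2\epsilon\, \F_X(T)\quad\text{or}\quad \F_X(T)\le C_1 s\M(T)+C_2\epsilon\,\M(T)^{(k+1)/k}.
\]
This is iterated over dyadic scales $s=2^j$. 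At each scale the excess over the linear term contracts by the factor $C_2\epsilon<1/2$, which gives $\F_X(T)\le C\M(T)$ for $\M(T)\ge r_0$. Small cycles with $\M(T)<r_0$ are handled by the Euclidean inequality $\F_X(T)\lesssim \M(T)^{1+1/k}\le r_0^{1/k}\M(T)$.

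I expect the main obstacle to be the cluster and self-improvement step. One must control the number and total boundary mass of the cells of the discretized $(k+1)$-filling so that the contraction constant $C_2$ is independent of the scale. This requires the lower density bound for minimizers together with the multiplicity bound from Nagata dimension. My first attempt would be a induction over scales, $A(r):=\FV_{k+1}^X(r)/r$, aiming to show $A(2^{k}r)\le A(r)+C$ with geometric decay of the superlinear part.
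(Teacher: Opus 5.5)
Your core mechanism matches the paper's. You discretize a minimizing filling $V$ of a compactly supported cycle $T$ at scale $s$, bound the number of $(k+1)$-cells by a multiple of $\F_X(T)/s^{k+1}$, fill the pushed-back cell boundaries using \eqref{eq:subEucl}, and absorb. Your first alternative, $\F_X(T)\le C_1s\M(T)+C_2\epsilon\F_X(T)$, is exactly what the paper obtains from Proposition~\ref{prp:1}. The second alternative, with $\M(T)^{(k+1)/k}$, is no stronger than the hypothesis.

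However, the closing step you describe does not work as stated. The missing observation is that the scale is free. Nagata covers exist for every $s\ge s_0$ with the same data, so $L=L(n,c)$ in Lemma~\ref{lem:trans} and $C_1=C_1(n)$ in Theorem~\ref{thm:deform} do not depend on $s$. Deform the pushed-forward filling as $V'=P+R+\partial D$ with $P=\sum_\sigma n_\sigma\llbracket\sigma\rrbracket$. Then $\M(P)\le C_1\M(V')$ alone gives $\sum_\sigma|n_\sigma|\lesssim\F_X(T)/s^{k+1}$; no density bound for minimizers is needed. Each $\phi_\#\partial\llbracket\sigma\rrbracket$ has mass at most $c_0s^k$. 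Filling the cells \emph{one by one} therefore gives the coefficient $\alpha(s)=b_0s^{-(k+1)}\FV_{k+1}^X(c_0s^k)$, which tends to $0$. Fix a single $s_*$ with $\alpha(s_*)\le\frac12$ and absorb; this is legitimate since $\F_X(T)<\infty$ for compactly supported $T$. This gives $\F_X(T)\le 2a_0s_*\M(T)$ for all such $T$ at once, with no dyadic iteration and no split into small and large mass.

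Clustering at a fixed scale only loses. A cluster of $N$ cells may have boundary mass close to $Nc_0s^k$, so the sub-Euclidean bound charges it $\epsilon(Nc_0s^k)^{(k+1)/k}$. The total is then of order $\epsilon N^{1/k}\F_X(T)$. Reaching the threshold $r_0$ requires $N\approx r_0/(c_0s^k)$, so this loss disappears only once $c_0s^k\ge r_0$, and then single cells suffice. Separately, your tentative recursion $A(2^kr)\le A(r)+C$ would only give $A(r)=O(\log r)$, not boundedness.

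Second, the theorem concerns all $T\in\I_k(X)$, and your reduction ``by approximation, using Theorem~\ref{thm:wen_ex}'' is not an argument. That theorem requires compact support, and in a non-proper $\mathrm{CAT}(0)$ space a finite-mass integral cycle need not have compact (or even bounded) support. Restricting $T$ to a compact set produces a boundary of uncontrolled mass. Slicing by a distance function yields pieces whose support is bounded but not necessarily compact. So there is no direct way to get compactly supported cycles approximating $T$ in mass with controlled fillings.

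The paper closes this with Wenger's thick--thin decomposition $T=R+\sum_jT_j$ (Lemma~\ref{lem:promo}, Appendix~\ref{app}):
\begin{itemize}
\item The thick part satisfies a uniform lower density bound $\|R\|(B_r(x))\ge D_1\gamma r^k$ on $\supp R$. Its support is therefore totally bounded, hence compact, and the compact case fills it with mass $\le C\M(R)\le C\M(T)$.
\item Each thin part has $\M(T_j)\le\frac32\gamma$ and $\mathrm{diam}(\supp T_j)\le D_2\gamma^{-1/k}\M(T_j)^{1/k}$. The cone inequality then fills it with mass $\le D_3(k)\M(T_j)$.
\item Since $\sum_j\M(T_j)\le 3\M(T)$, these fillings converge in mass. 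The closure theorem gives an integral filling of $T$ with mass $\le(C+3D_3)\M(T)$.
\end{itemize}
Some such step is needed to obtain the statement as written.
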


Note that $X$ is not necessarily proper. Combining the above theorem with Wenger's theorem \cite{Wen11} on the sub-Euclidean growth for $k\geq \nu$ with finite asymptotic rank $\nu$ gives the higher rank linear isoperimetric filling inequality.

\begin{theorem}[Linear filling inequality]\label{thm:2} 
    Let $X$ be a complete  $\mathrm{CAT}(0)$-space with finite asymptotic Nagata dimension and finite asymptotic rank $\nu$. For every integer $k\geq\max\{ \nu,1\}$, there is a $C=C(X,k)>0$ such that every $T\in \I_{k}(X)$ with $\partial T=0$ satisfies
    \begin{equation}
        \F_X(T)\leq C \M(T)\,.\label{eq:linfil}
    \end{equation}If $T\in \I_{k,c}(X)$, then the support of the filling realizing $\F_X(T)$ can be chosen to be compact.
\end{theorem}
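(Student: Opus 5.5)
Theorem~\ref{thm:2} follows by combining Theorem~\ref{thm:1} with Wenger's sub-Euclidean growth theorem \cite{Wen11}. The plan is to verify the hypothesis \eqref{eq:subEucl} of Theorem~\ref{thm:1} for every $k\geq\max\{\nu,1\}$, then apply Theorem~\ref{thm:1}, and finally handle the statement about compact supports.

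\textbf{Step 1: the hypothesis \eqref{eq:subEucl}.} Let $X$ be a complete $\mathrm{CAT}(0)$-space with asymptotic rank $\nu<\infty$, and fix an integer $k\geq \nu$. Wenger's theorem \cite{Wen11} says that in this range the filling volume function grows sub-Euclideanly, i.e.
\[
\lim_{r\to\infty}\FV_{k+1}^X(r)/r^{(k+1)/k}=0 .
\]
Two points need checking before we can use this.
\begin{itemize}
\item \emph{Properness.} Wenger's result is formulated for complete $\mathrm{CAT}(0)$-spaces via ultralimits and the asymptotic rank, so I expect no properness assumption is needed. If it is phrased only for compactly supported cycles, we reduce to that case. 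A finite-mass integral cycle $T$ can be approximated by compactly supported cycles $T_j$ with $\M(T_j)\to\M(T)$. These $T_j$ are obtained by slicing and coning off in the $\mathrm{CAT}(0)$ space. The cone-type inequality controls the mass of the error fillings, so the bounds pass to the limit.
\item \emph{The case $\nu=0$.} Then $k\geq 1$ and we may still invoke Wenger's theorem for $k\geq 1>\nu$. Alternatively, asymptotic rank at most $1$ means $X$ is Gromov hyperbolic by \cite[Corollary 1.3]{Wen11}, and sub-Euclidean (indeed linear) growth then holds in all dimensions $k\geq 1$.
\end{itemize}

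\textbf{Step 2: apply Theorem~\ref{thm:1}.} Since $X$ has finite asymptotic Nagata dimension, Theorem~\ref{thm:1} applies for each such $k$. It yields $C=C(X,k)$ and, for every cycle $T\in\I_k(X)$, a filling $V$ with $\M(V)\leq C\M(T)$. Hence
\[
\F_X(T)\leq \M(V)\leq C\M(T),
\]
which is \eqref{eq:linfil}.

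\textbf{Step 3: compact supports.} For $T\in \I_{k,c}(X)$, Theorem~\ref{thm:wen_ex} guarantees that the infimum defining $\F_X(T)$ is attained by a compactly supported minimal filling. That filling realizes $\F_X(T)$, and \eqref{eq:linfil} holds for it by Step 2.

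\textbf{Main obstacle.} The only delicate point is ensuring that Wenger's sub-Euclidean statement is available in exactly the form of \eqref{eq:subEucl}: a statement about the supremum over all finite-mass cycles, in possibly non-proper $X$. If needed, I would first prove it for compactly supported cycles and then use the approximation argument from Step 1, relying on the Euclidean-type isoperimetric inequality to control the error terms.
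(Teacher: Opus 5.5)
Your proposal is correct and follows the paper's proof: you verify \eqref{eq:subEucl} via Wenger's sub-Euclidean theorem (Theorem~\ref{thm:wen_subeuc}), apply Theorem~\ref{thm:1}, and get the compact minimal filling from Theorem~\ref{thm:wen_ex} (Theorem~\ref{thm:1} also gives it directly). The hypotheses of Wenger's theorem to check are quasiconvexity and cone-type inequalities, not properness, and both are immediate for complete $\mathrm{CAT}(0)$-spaces (they are geodesic, and Theorem~\ref{thm:wen_coning} gives the cone inequalities), so your slicing/approximation detour is unnecessary; you should also drop the claim that Gromov hyperbolicity already yields linear growth for all $k\geq 1$, since for $k>1$ this is not available in general and only sub-Euclidean growth is needed.
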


 In the special case $\nu\leq 2$ and $k\geq 2$, Theorem \ref{thm:2} is the $\delta$-free limit of the results in \cite{Wen11} and \cite{LSU26}. As we shall see, our proof is not restricted to asymptotic rank at most $2$ as it does not rely on the uniform density bounds \cite[Lemma 4.2]{LSU26}, which are based on ideas from \cite{Sta21}. Moreover, due to an application of Wenger's thick-thin decomposition \cite[Theorem~4.2]{Wen11a}, we can state our theorems more generally for currents of finite mass, as opposed to the compactly supported setting in \cite{LSU26}.
 
 The linear exponent in \eqref{eq:linfil} is (asymptotically) sharp. This can already be seen by considering growing geodesic balls in real hyperbolic space. Since our assumptions agree with the ones in \cite{LSU26} for asymptotic rank~at most~$2$, Theorem \ref{thm:2} applies to all instances they describe, including Hada\-mard $3$-manifolds, finite-dimensional $\mathrm{CAT}(0)$-cube complexes, and suitably coverable Gromov hyperbolic $\mathrm{CAT}(0)$-spaces; see \cite{LSU26} for more details. 
 
Theorem~\ref{thm:1} and \ref{thm:2} fit into the program of unifying higher-rank hyperbolicity initiated by Kleiner--Lang \cite{KL20} and further developed by Goldhirsch--Lang \cite{GL23}. In both of our theorems, we allow cycles to be arbitrary integral currents of finite mass and assume apart from that only conditions on the ambient space $X$, which makes it possible to phrase the linear filling inequality as the linear asymptotics of the filling function \begin{equation}\label{eq:FV_asymp}
    \FV_{k+1}^X(r)=\mathcal O(r)\,.
\end{equation}

This immediately yields a characterization of the asymptotic rank in terms of the filling function. Other notable consequences of Theorem \ref{thm:2} are the positivity of Cheeger's constant for oriented Hadamard manifolds with finite asymptotic Nagata dimension and finite asymptotic rank and an improved deformation theorem of \cite[Theorem~5.1]{BWY23} without support-dependent constants and without assuming a linear structure. To continue with the proof of our main results, we postpone those consequences to Section~\ref{sec:6}.

Our proof of Theorem \ref{thm:1}, and thus Theorem \ref{thm:2}, is based on the following proposition, which decomposes a compactly supported cycle $T$ into a current $Q$ with absorbable filling term and a boundary $\partial S$ whose filling is mass-dominated by $T$. 

\begin{proposition}[Decomposition with absorbable filling term]\label{prp:1} Let $X$ be a complete $\mathrm{CAT}(0)$-space with finite asymptotic Nagata dimension at most $n$, and let $k\geq 1$.
     There are constants $a_0,b_0, c_0, s_0>0$ such that for every $T\in \I_{k,c}(X)$ with $\partial T=0$ and every $s\geq s_0$ there exists $S\in \I_{k+1,c}(X)$ and $Q\in \I_{k,c}(X)$ with 
    \begin{equation}\label{eq:decomp1}
        T=Q+\partial S\,,
    \end{equation} 
    \begin{equation}\label{eq:decomp2}
        \M(S)\leq a_0 s \M(T)\,,
    \end{equation} and a filling $W\in \I_{k+1,c}(X)$ of $Q$ satisfying
    \begin{equation}\label{eq:decomp3}
        \M(W)\leq b_0 \frac{\FV_{k+1}^X(c_0 s^k)}{s^{k+1}}  \F_X(T)\,.
    \end{equation}
\end{proposition}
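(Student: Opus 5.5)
Let $V\in\I_{k+1,c}(X)$ be a minimal filling of $T$, so $\partial V=T$ and $\M(V)=\F_X(T)$; it exists by Theorem~\ref{thm:wen_ex}. The idea is to discretize $V$ at scale $s$, one dimension above $T$, and not to discretize $T$ itself. From the finite asymptotic Nagata dimension we get, for each $s\ge s_0$, a cover of $X$ by sets of diameter $\lesssim s$ with multiplicity at most $n+1$ at scale $\sim s$. From this cover we build a Lipschitz partition of unity with constant $\lesssim 1/s$. Using geodesic coning in the $\mathrm{CAT}(0)$-space, this gives a map $\phi$ from $X$ into the geometric realization of the nerve $\Sigma$, together with a map $\psi\colon\Sigma\to X$ back. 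The map $\psi$ sends vertices to chosen points of the cover sets and extends over simplices by iterated geodesic coning. The composite $\psi\circ\phi$ moves points by $\lesssim s$ and is Lipschitz with constant depending only on $n$.

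The first step is a Federer--Fleming type deformation of $V$. I push $V$ forward into $\Sigma$ and deform it radially, simplex by simplex, to the $(k+1)$-skeleton, choosing projection centres by an averaging argument. The result is a polyhedral chain $P$ plus a homotopy term. Mapping everything back to $X$ by $\psi$ gives $V=\psi_\#P+\partial R+ H$. Here $H$ is the homotopy of $\partial V=T$, obtained from the straight-line homotopy between $\mathrm{id}$ and $\psi\circ\phi$ together with the skeleton projection. The cone and homotopy estimates give $\M(R)\lesssim s\M(V)$ and $\M(H)\lesssim s\M(T)$, and the polyhedral piece has mass $\M(\psi_\#P)\lesssim\M(V)$.

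I then set $S:=H$ (adjusted by signs) and $Q:=T-\partial S$, which is exactly the deformed boundary $\partial(\psi_\#P)$ pushed to the $k$-skeleton image. This gives \eqref{eq:decomp1} and \eqref{eq:decomp2}, provided $H$ can be controlled by $s\M(T)$ rather than by $s\M(V)$. That should hold because $H$ is a homotopy of $T$ alone, of dimension $k+1$.

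The key point, and the main obstacle, is to produce a filling $W$ of $Q$ satisfying \eqref{eq:decomp3}. The cycle $Q$ is the boundary of the simplicial $(k+1)$-chain $\psi_\#P$. Each $(k+1)$-simplex $\Delta$ of $P$ carries an integer multiplicity $m_\Delta$, and $\sum|m_\Delta|\,s^{k+1}\lesssim\M(V)$, since each simplex image has volume $\sim s^{k+1}$. I would therefore fill $Q$ simplexwise, not by the coned images $\psi(\Delta)$, which have mass $\sim s^{k+1}$. Instead, for each simplex I fill the boundary cycle $\psi_\#\partial\Delta$ optimally. That cycle has mass $\lesssim s^k$, hence filling volume $\le\FV_{k+1}^X(c_0s^k)$. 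Taking $W:=\sum m_\Delta W_\Delta$ gives
\[
\M(W)\le\sum|m_\Delta|\,\FV_{k+1}^X(c_0s^k)\lesssim\frac{\FV_{k+1}^X(c_0s^k)}{s^{k+1}}\M(V),
\]
which is \eqref{eq:decomp3}.

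The delicate part is the multiplicity count $\sum|m_\Delta|\lesssim \M(V)/s^{k+1}$. Federer--Fleming projection only gives this after an averaging choice of projection centres, so that the pushed mass is controlled, together with the bounded multiplicity $n+1$ of the cover. Compact supports are preserved throughout, since only finitely many cover sets meet $\supp V$.
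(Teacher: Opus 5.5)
Your proposal is correct and is essentially the paper's proof: a minimal filling $V$, discretization of $\supp V$ at scale $s$ via the Nagata cover (the paper cites \cite[Lemma 2.1]{LSU26} for your nerve and geodesic-coning maps, with the names $\phi,\psi$ interchanged), a Federer--Fleming deformation in $\Sigma$ (cited from \cite{BWY23}) after which taking boundaries leaves in $S$ only the $(k+1)$-dimensional remainder and the geodesic homotopy of $T$, and optimal fillings of the cycles $\psi_\#\partial\Delta$ summed with multiplicities. The one point to phrase carefully is the simplex count: it comes from $\omega_{k+1}s^{k+1}\sum|m_\Delta|=\M(P)\lesssim\M(V)$, computed in $\Sigma$, where the simplices are regular of edge length $s$ with disjoint interiors; it does not come from the volumes of the images $\psi(\Delta)$ in $X$, which have no lower bound.
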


Note that all constants $a_0,b_0,c_0,s_0$ depend only on $k$ and the quantitative data from the covers in the definition of the finite asymptotic Nagata dimension; see Remark \ref{rmk} and Subsection \ref{subsec:2.5}. In particular, they do not depend on any current, simplex number, or scale.

\subsection{Proof strategy and comparison with Lang--Stadler--Urech}\label{subsec:proof}

The proof of Proposition \ref{prp:1} consists of five steps. 

In Step 1 we choose a minimizing filling $V$ of $T$ and, at a sufficiently large scale $s$, apply \cite[Lemma 2.1]{LSU26} to the support of the filling $V$ to pass to a finite-dimensional simplicial complex $\Sigma$. This is the only place where the finite asymptotic Nagata dimension enters the proof; for a more detailed discussion see Subsection \ref{subsec:2.5}. In Step 2 we push $V$ to a current $V'$ in $\Sigma$ and deform $V'$ to obtain $$V'=P+R+\partial D$$ with a simplicial $(k+1)$-chain $P$ and the remaining terms having controlled mass. Taking boundaries eliminates the term $\partial D$. In Step 3 we write $P=\Sigma_\sigma n_\sigma \llbracket \sigma\rrbracket$, $n_\sigma\in \Z$, and give an upper bound on the number of top-dimensional simplices $$\sum_\sigma |n_\sigma|\lesssim \frac1{s^{k+1}}\F_X(T)\,.$$ 
In Step 4 we push the resulting boundary decomposition $\partial V'=\partial P+\partial R$ back to $X$ and obtain the currents $Q$ and $S$ satisfying \eqref{eq:decomp1} and \eqref{eq:decomp2}. Finally, in Step 5 we fill the boundaries of each of the $(k+1)$ simplices in $X$. Together with the estimate on the simplex count, this yields the absorbable bound~\eqref{eq:decomp3}.

Compared to \cite{LSU26}, the main difference is that we deform a $(k+1)$-dimensional minimizing filling at a single scale after discretization instead of decomposing $k$-dimensional cycles iteratively on multiple scales. In \cite[Theorem 5.3]{LSU26} the corresponding simplicial count is of the form $$\sum_\sigma |n_\sigma|\lesssim \frac1{s^{k}}\M(T)\,,$$ so in contrast we acquired one power of $s^{-1}$. This is the main analytic gain that facilitates the final absorption argument. There is also an algebraic gain in the proof as the deformation produces a boundary term $\partial D$ that disappears when taking boundaries, so its mass does not enter the final estimate. Moreover, our approach avoids the controlled density bounds \cite[Lemma 4.2]{LSU26} that were used in the rank-two argument of \cite{LSU26}, and thus extends beyond the rank-two threshold.

Nevertheless, there is a hidden trade in the covering hypothesis. Our global assumption is a finite asymptotic Nagata dimension. However, we only use it when applying \cite[Lemma 2.1]{LSU26} in form of a uniform quantitative covering of $(k+1)$-dimensional minimizing filling supports $V$, whereas \cite{LSU26} applies its covering argument to the supports of the $k$-dimensional cycles occurring in their iteration.
On the other hand, our deformation is performed only once, at one sufficiently large scale, rather than through a multiscale iteration. In this regard, our approach with the absorption in one step gives a simple, quantitative dependence of the covering scale, the sub-Euclidean growth rate of the filling function, and the isoperimetric constant.

Passing to higher dimensions to make a difficult boundary problem more tractable appears also in other fields. For instance, there is a structural analogy between deforming a filling and surgery for cobordisms (see \cite{Mil65}, for instance). Another example is the Caffarelli--Silvestre extension \cite{CS07}, which represents a nonlocal operator on $\R^d$ as the boundary of a local operator on $\R^{d+1}$. Although it is not of the same scope, we hope that our strategy described above will be useful to prove other linear filling inequalities in the future.

\subsection{Structure of the paper} In Section \ref{sec:2} we recall basic notions such as  integral currents, fillings, asymptotic rank, and asymptotic Nagata dimension from general metric theory including classical results. After these preliminaries, we discuss the two main technical inputs, an approximation by simplicial complexes and a Federer--Fleming-type deformation theorem, in Section \ref{sec:3}. The mathematical core of our paper is Section \ref{sec:4}, where Proposition \ref{prp:1} is proved. By choosing a suitable scale when applying this proposition in Section \ref{sec:5}, we deduce the linear self-improvement theorem, Theorem~\ref{thm:1}, which together with Wenger's sub-Euclidean growth theorem establishes the linear isoperimetric filling inequality, Theorem \ref{thm:2}. To pass from compactly supported currents to currents of finite mass, we rely on a thick-thin decomposition, which is discussed in Appendix \ref{app}. Finally, in Section~\ref{sec:6} we provide some direct consequences of our linear filling inequality.

\subsection*{Acknowledgments} The author would like to thank Stephan Stadler for his valuable comments at an early stage of this project. Partial support through the German Research Foundation grants FR 2664/3-1  and TRR 352-Project-ID 470903074 and through the Studienstiftung des deutschen Volkes is acknowledged.

\section{Preliminaries}\label{sec:2}
 
Here and in the following, we will always work in a $\mathrm{CAT}(0)$-space $X$ with metric $d$. Elementary examples of $\mathrm{CAT}(0)$-spaces include, for instance, Euclidean and Hilbert spaces, Hadamard manifolds, and $\R$-trees. $\mathrm{CAT}(0)$-spaces have many useful properties, most notably for our later purposes, the existence and uniqueness of geodesics and Busemann convexity. We refer to \cite{Bal95,BH99,BBI01} for more details on $\mathrm{CAT}(0)$-spaces.

\subsection{Integral currents}\label{subsec:2.1}

We work in the theoretical framework of metric integral currents in a complete metric space $X$; see \cite{AK00,Lan11}. In contrast to \cite{LSU26}, we do not restrict our discussion to compactly supported currents. We will give a short exposition of metric current theory, aligning with its use in our proofs, and refer to the above mentioned works for more details. 

Let $\mathrm{Lip}(X)$ be the space of real-valued Lipschitz functions and $\mathrm{Lip}_b(X)$ be the subspace of bounded functions in $\mathrm{Lip}(X)$. A $k$-dimensional \textit{current} $T$, $k\geq 0$, is a multilinear functional of the form $T:\mathrm{Lip}_b(X)\times \mathrm{Lip}(X)^k\to \R$, satisfying some additional locality and continuity assumptions; cf.~\cite{AK00}. If $k\geq 1$, the \textit{boundary} of the $k$-dimensional current $T$ is the $(k-1)$-dimensional current $\partial T$ defined by $$\partial T(g_{0}, \dots, g_{k-1})\coloneqq T(1,g_0,\dots, g_{k-1})\,.$$ It satisfies $\partial^2 T=0$. A current with $\partial T=0$ is called a \textit{cycle}. If $k=0$, then we call $T$ a cycle if $T(1)=0$ in accordance with \cite{BWY23}. An \textit{integral current} is integer rectifiable with a finite mass boundary; cf.~\cite{AK00}. We denote by $\I_{*}(X)$ the chain complex of finite-mass integral currents and by $\I_{*,c}(X)$ the chain subcomplex of compactly supported integral currents.

There is a natural associated Borel measure $\|T\|$ on $X$ that has finite total \textit{mass} $\M(T)\coloneqq\|T\|(X)$ and is compactly supported if and only if $T$ is. The mass $\M$ induces a metric on $\I_{k}(X)$, $k\geq 0$, given by $(S,T)\mapsto \M(S-T)$ for $S,T\in \I_{k}(X)$.

For a Lipschitz continuous function $h:X\to Y$ with $Y$ some other complete metric space, we define the \textit{pushforward} of $T$ by $$h_\#T(g_0,\dots,g_k)\coloneqq T(g_0\circ h, \dots, g_k\circ h)\,.$$ The Lipschitz constant of $h$ at $x\in X$ is denoted by $\lip h(x)$. A useful inequality for Lipschitz functions $h$ is
\begin{equation} \label{eq:lip_gen}
   \|h_\# T\|(B)\leq \int_{h^{-1}(B)} \lip (h)(x)^k \,\mathrm d\|T\|(x)
\end{equation} for every Borel set $B\subseteq Y$; see \cite[Lemma 3.3]{BWY23}, for instance. If $\lip h(x)\leq L$, then \begin{equation}
\label{eq:lip_bdd}\M(h_\# T)\leq L^k \M(T)\,.
\end{equation}

 If $E\subset \R^k$ is a measurable set with finite measure, this induces a current~$\llbracket E\rrbracket$ defined~by \begin{equation}
     \llbracket E\rrbracket (g_0,\dots, g_k)\coloneqq \int_{E} g_0 \det D(g_1,\dots, g_k) \,\mathrm dx\,.\label{eq:E_form}
 \end{equation} Here $g_i$, $i\in \{0,\dots,k\}$, denote Lipschitz functions, whose partial derivatives exist by Rademacher's theorem.
Note that $\llbracket E\rrbracket$ is the classical current of integration over measurable $E\subseteq \R^k$; it acts on smooth differential $k$-forms $g_0 \,\mathrm d g_1\wedge \dots \wedge \mathrm d g_k$ on~$E$.

Another example, which is relevant for our argument, is provided by currents associated with simplicial chains. Let $\Sigma$ be a finite (Euclidean) simplicial complex. Fix one orientation for each $k$-simplex $\sigma\subseteq \Sigma$. If we denote by $\llbracket\sigma\rrbracket\in \I_{k,c}(\Sigma)$ the current of integration over the oriented $k$-simplex $\sigma$, then an \textit{integral simplicial $k$-chain} can be uniquely represented by the finite sum
$$P=\sum_\sigma n_\sigma \llbracket\sigma\rrbracket\,,\qquad n_\sigma\in \Z\,,$$
and we can identify the simplicial chain with the corresponding integral current.

In the next subsection we discuss products of currents with intervals in the context of homotopy currents.

\subsection{Geodesic homotopy currents}\label{subsec:2.3}
Let $T\in \I_k(X)$ be a cycle and $h:\supp T\to X$ an $L$-Lipschitz function. We construct a (geodesic) homotopy $$H:[0,1]\times \supp T\to X\,,\qquad H(t,x)=[x,h(x)](t)\,,$$ where $[x,y](t)$ denotes the (unique) constant-speed geodesic between $x$ and~$y$. The product current $\llbracket 0,1 \rrbracket \times T\in \I_{k+1}([0,1]\times X)$ is defined as the multilinear functional 
\begin{align*}(\llbracket 0,1 &\rrbracket \times T)(g_0,\dots ,g_{k+1})\\&\coloneqq \sum_{i=1}^{k+1}(-1)^{i+1} \int_0^1 T(g_{0t}\partial_tg_{it}, g_{1t},\dots ,g_{(i-1)t}, g_{(i+1)t},\dots ,g_{(k+1)t})\,\mathrm dt\,,\end{align*} where $g_{jt}(x)=g_j(t,x)$ for $t\in [0,1]$, $x\in X$, and $j\in\{0,\dots,k+1\}$;
 see \cite[Section~3.3]{BWY23}, for instance. Since $\partial T=0$, the boundary formula \cite[Proposition~3.4]{BWY23} implies \begin{equation}\label{eq:hash2}
    \partial H_\# (\llbracket0,1\rrbracket\times T)= (H_1)_\# T-(H_0)_\# T= h_\# T-T\,.
\end{equation}We write $H_t(x)=H_x(t)=H(t,x)$. For fixed $x$, constant speed gives $$\lip  H_x(t)\equiv d(x, h(x))\,.$$ For fixed $t$, Busemann convexity \cite{Bus55} applied to the two geodesics from $x$ to $h(x)$ and $y$ to $h(y)$ gives $$d(H(x,t),H(y,t))\leq (1-t) d(x,y)+t d(h(x),h(y))\,,\qquad x,y\in X\,.$$ Hence, we obtain
$$\lip  H_t(x)\leq (1-t)+t L \leq \max\{1, L\}\,.$$ 
Combining both Lipschitz bounds, the bound in \cite[Lemma 3.5]{BWY23} implies the useful mass bound \begin{align}
    \M(H_{\#}(\llbracket0,1\rrbracket\times T))&\leq (k+1)\int_0^1 \int_X \lip H_x(t) \lip H_t(x)^k\,\mathrm d\|T\|(x)\mathrm d t\notag\\&\leq(k+1)\sup_{x\in \supp T}d(x, h(x))(\max\{1, L\})^k\M(T)\,. \label{eq:mass1}
\end{align}
If $T$ does not have compact support, the right side could be infinite.

\subsection{Fillings and related theorems}

Let $T\in \I_{k}(X)$, $k\geq 1$, be a cycle. Then we call a current $V\in \I_{k+1}(X)$ a \textit{filling} of $T$ if $\partial V=T$. Fillings of boundedly supported cycles satisfy a \textit{cone-type inequality} for $\I_{k}(X)$.

\begin{theorem}[{\cite[Theorem 4.1~($\kappa=0$)]{Wen06}}]\label{thm:wen_coning}Let $X$ be a complete $\mathrm{CAT}(0)$-space and $k\geq 1$. For every $T\in \I_{k}(X)$ with $\partial T=0$ and supported in a closed ball of radius $r>0$, there exists a filling $V\in \I_{k+1}(X)$ with \begin{equation}
    \label{eq:cone_type}
\M(V)\leq \frac{r}{k+1}\M(T)\,.\end{equation}
\end{theorem}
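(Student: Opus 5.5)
The natural approach is the geodesic cone construction over a point, using the homotopy machinery of Subsection~\ref{subsec:2.3}. Let $T\in\I_k(X)$ be a cycle with $\supp T\subseteq \bar B(p,r)$. Take the constant map $h\equiv p$ on $\supp T$ (Lipschitz constant $L=0$) and the geodesic homotopy $H(t,x)=[x,p](t)$. The cone is then $V\coloneqq -H_\#(\llbracket0,1\rrbracket\times T)$. By \eqref{eq:hash2} we get $\partial V=-(h_\#T-T)=T-h_\#T$. Since $h_\#T$ is a $k$-current supported at a single point and $k\geq1$, it vanishes: an integral $k$-current cannot live on a set of zero $\mathcal H^k$-measure. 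Hence $\partial V=T$. Integrality of $V$ follows from the general fact that Lipschitz pushforwards of the integral product current $\llbracket0,1\rrbracket\times T$ are integral. If $T$ lacks compact support, one first notes that $\supp T$ is bounded and that the construction only needs a bounded set, so the formulas of \cite{BWY23} still apply; finite mass is ensured by the estimate below.

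The crude bound \eqref{eq:mass1} with $L=0$ gives only $\M(V)\le (k+1)r\M(T)$. To get the sharp constant $r/(k+1)$, I would go back to the integral form in \cite[Lemma~3.5]{BWY23} and use the finer Lipschitz estimates. For fixed $x$, we have $\lip H_x(t)= d(x,p)\le r$. For fixed $t$, Busemann convexity with $h$ constant gives
$$d(H_t(x),H_t(y))\le (1-t)\,d(x,y),\qquad\text{so}\qquad \lip H_t\le 1-t .$$

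The key point is that the factor $(k+1)$ in the displayed bound is an overcount. The product current is a sum of $k+1$ terms, but for the cone only the term in which the $t$-derivative hits one slot produces a genuinely $(k+1)$-dimensional contribution. Equivalently, the metric mass of the cone over a $k$-current is bounded by the Jacobian estimate $\|V\|\le \int_0^1 r(1-t)^k\,dt\;\M(T)$, which Wenger obtains via a sharper version of the homotopy formula. This refinement is where I expect the main obstacle. Mass in metric currents is not simply comparable to the Euclidean Jacobian, so I would follow Wenger's argument, which bounds the mass of $H_\#(\llbracket0,1\rrbracket\times T)$ by $\int_0^1 \operatorname{Lip}(H_x)\,\operatorname{Lip}(H_t)^k\,dt$ without the combinatorial factor. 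One way to get this is to approximate $T$ by currents whose parametrizations are bi-Lipschitz pieces (rectifiability) and to use that the mass of an integral current is at most the Hausdorff measure up to the area-factor conventions of \cite{AK00}; here the choice of mass normalization matters.

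With the refined bound in hand,
$$\M(V)\le \int_0^1 r(1-t)^k\,dt\;\M(T)=\frac{r}{k+1}\M(T),$$
which is \eqref{eq:cone_type}.
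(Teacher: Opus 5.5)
The construction is the right one, but the proof has a genuine gap at the sharp constant. Your cone $V=-H_\#(\llbracket0,1\rrbracket\times T)$ with $h\equiv p$, the identity $\partial V=T$ (since $h_\#T=0$), integrality, and the bound $\lip H_t\le 1-t$ are all fine. The paper gives no proof here and simply quotes \cite{Wen06}.

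The gap is the constant $\frac{r}{k+1}$, which is the whole content of the statement and which you assert rather than prove. With the ingredients you actually use, the integral bound of \cite[Lemma~3.5]{BWY23} gives $(k+1)\int_0^1 r(1-t)^k\,\mathrm dt\,\M(T)=r\M(T)$, and that is where your argument stops.

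Your reason for discarding the factor $k+1$ is wrong. For $T=\llbracket E\rrbracket$, the $k+1$ summands defining $\llbracket0,1\rrbracket\times T$ are exactly the cofactor expansion of $\det D_{(t,x)}(g_1,\dots,g_{k+1})$ along the $t$-column. So none of them is ``lower-dimensional''; only the full determinant can be controlled without the factor. In fact the factor-free estimate $\M(H_\#(\llbracket0,1\rrbracket\times T))\le\int_0^1\int_X\lip H_x(t)\,\lip H_t(x)^k\,\mathrm d\|T\|\,\mathrm dt$ does not follow from Lipschitz bounds and Busemann convexity. Take $(\R^2,\|\cdot\|_\infty)$ with $H(t,x)=(1-t)x$ and $T=\partial\llbracket E\rrbracket$, $E=\{|x_1|+|x_2|\le 1\}$. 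The cone is $\llbracket E\rrbracket$, of mass $2$, while the right side is $\frac12\int|x|_\infty\,\mathrm d\|T\|=\frac32$.

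Likewise, ``mass is at most Hausdorff measure up to area-factor conventions'' points the wrong way. The Ambrosio--Kirchheim area factor can exceed $1$ (it is $4/\pi$ for $2$-currents in $\ell_\infty^2$), and there is no normalization left to choose, since $\M$ is fixed.

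The missing idea is where $\mathrm{CAT}(0)$ enters beyond convexity. Metric differentials of Lipschitz maps from subsets of Euclidean space into a $\mathrm{CAT}(0)$-space are almost everywhere induced by inner products (a classical fact going back to Korevaar--Schoen). Hence the area factor equals $1$, that is, $\|T\|=\theta\,\mathcal H^k\llcorner\operatorname{set}(T)$, and Jacobians are Gram determinants. Here is how this gives the sharp bound:
\begin{itemize}
\item Parametrize $T$ by bi-Lipschitz pieces $\varphi_i\colon K_i\to X$ and set $\Phi_i(t,z)=H(t,\varphi_i(z))$.
\item At almost every $(t,z)$ the metric differential of $\Phi_i$ is Euclidean, assigns length $d(\varphi_i(z),p)$ to $\partial_t$, and is bounded by $(1-t)\,\mathrm{md}\,\varphi_i(z)$ in the $z$-directions.
\item Fischer's inequality for Gram determinants then gives $J\Phi_i(t,z)\le d(\varphi_i(z),p)\,(1-t)^k\,J\varphi_i(z)$.
\item The area formula yields $\M(V)\le\int_0^1\int_X d(x,p)(1-t)^k\,\mathrm d\|T\|\,\mathrm dt\le\frac{r}{k+1}\M(T)$.
\end{itemize}
Without this inner-product structure your proof only gives $\M(V)\le r\M(T)$. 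That would still suffice for the uses of the theorem in this paper, where only constants change, but it is not the stated inequality.
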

 
The filling $V$ with the minimal mass determines the \textit{filling volume} of a cycle $T\in \I_{k}(X)$, $k\geq 1$, which is given by $$\F_X(T)\coloneqq \inf \{\M(V):V\in \I_{k+1} (X) \,\text{and}\, \partial V=T\}\,.$$ For $T\in \I_{k,c}(X)$ in a Hadamard space $X$, this infimum is attained by a compactly supported filling. Indeed, the following theorem holds.
 
\begin{theorem}[{\cite[Theorem 1.6]{Wen05}}]\label{thm:wen_ex} Let $X$ be a complete $\mathrm{CAT}(0)$-space and $k\geq 1$. For every $T\in \I_{k,c}(X)$ with $\partial T=0$, there exists a $V\in \I_{k+1,c}(X)$ with \begin{equation}\label{eq:min_fill}
    \partial V=T\qquad \text{and}\qquad \M(V)=\F_X(T)\,,
\end{equation} and every $V\in \I_{k+1}(X)$ satisfying \eqref{eq:min_fill}
has compact support.
\end{theorem}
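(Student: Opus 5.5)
The plan is the direct method of the calculus of variations. Two facts do the work. First, a Euclidean-type isoperimetric inequality, which I take as derived from the cone-type inequality of Theorem~\ref{thm:wen_coning} by Wenger's general argument: every cycle $Z\in \I_k(X)$ has a filling of mass at most $C_k\M(Z)^{(k+1)/k}$. Second, the $1$-Lipschitz nearest-point projection onto closed convex subsets of a $\mathrm{CAT}(0)$-space. The order is: existence first, then compact support of every minimizer. The density argument in the second step is also the device that makes the first step work.

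\emph{Density bound and compact support.} Let $V$ satisfy \eqref{eq:min_fill}, let $x\in\supp V\setminus\supp T$, and set $f(r)=\|V\|(B(x,r))$ for $0<r<d(x,\supp T)$. By slicing with the $1$-Lipschitz function $d(x,\cdot)$, for almost every such $r$ the slice $\langle V,d(x,\cdot),r\rangle$ is an integral $k$-cycle. It equals $\partial(V\llcorner B(x,r))$ and has mass at most $f'(r)$. Replacing $V\llcorner B(x,r)$ by an isoperimetric filling of this slice gives another filling of $T$, so minimality yields $f(r)\le C_k f'(r)^{(k+1)/k}$. Integrating this differential inequality gives the lower density bound
\[
\|V\|(B(x,r))\ge c_k r^{k+1}\qquad\text{for } 0<r<d(x,\supp T),\ x\in \supp V.
\]
Because $\M(V)<\infty$, this bound forces $\supp V$ into a bounded neighbourhood of the compact set $\supp T$. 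It also makes $\supp V$ totally bounded. Indeed, take an $\epsilon$-separated set in $\supp V\setminus N_{2\epsilon}(\supp T)$; the balls of radius $\epsilon/2$ around its points are disjoint and each carries mass at least $c_k(\epsilon/2)^{k+1}$, so the set is finite. The neighbourhood $N_{2\epsilon}(\supp T)$ is covered by finitely many $3\epsilon$-balls because $\supp T$ is compact. Since $\supp V$ is closed and $X$ is complete, $\supp V$ is compact. This proves the second assertion.

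\emph{Existence.} Take a minimizing sequence $V_i$ with $\partial V_i=T$ and $\M(V_i)\to\F_X(T)$. I first improve the sequence to quasi-minimizers that satisfy the same density bound up to a small error. Following Wenger's procedure, I repeatedly cut out balls where the density is too small and refill them via the isoperimetric inequality. Each such replacement changes the mass by an amount controlled by a small parameter tending to $0$. The argument of the previous step then shows that, for each $\epsilon>0$, all the supports $\supp V_i$ lie, up to mass at most $\epsilon$, in a uniformly bounded and uniformly totally bounded family. By Gromov's compactness theorem these sets embed isometrically into one compact metric space, for instance inside an ultrapower $X_\omega$ of $X$.

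The ultrapower $X_\omega$ is again a complete $\mathrm{CAT}(0)$-space and contains $X$ as a closed convex subset. The Ambrosio--Kirchheim compactness theorem then gives a subsequence converging weakly to some $V_\infty\in\I_{k+1}(X_\omega)$. Lower semicontinuity of mass gives $\M(V_\infty)\le\F_X(T)$, and weak convergence gives $\partial V_\infty=T$. Finally let $\pi:X_\omega\to X$ be the nearest-point projection, which is $1$-Lipschitz. Then $V:=\pi_\#V_\infty$ lies in $\I_{k+1}(X)$ and satisfies $\M(V)\le\M(V_\infty)\le \F_X(T)$. Since $\pi$ is the identity on $X\supseteq\supp T$, we also have $\partial V=\pi_\#T=T$. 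Hence $V$ attains the infimum, and by the first step it has compact support.

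\emph{Main obstacle.} The hardest step is the lack of properness of $X$. A bare minimizing sequence need not have precompact supports. The delicate point is the quasi-minimizer modification: the density bound has to hold uniformly in $i$ while the mass error stays summable. Once that is in place, the ultralimit-and-projection argument returns the limit to $X$ without any further difficulty.
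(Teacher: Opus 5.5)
The paper does not prove this statement; it quotes \cite[Theorem~1.6]{Wen05}. Your compact-support half is correct and complete. For $x\in\supp V\setminus\supp T$ the slice by $d(x,\cdot)$ equals $\partial(V\llcorner B(x,r))$. Minimality and the Euclidean isoperimetric inequality give $f(r)\le C_k f'(r)^{(k+1)/k}$, integration gives $\|V\|(B(x,r))\ge c_k r^{k+1}$, and the packing argument (essentially that of Lemma~\ref{lem:tot_bdd}) yields compactness. Your endgame is also sound: $X$ is a closed convex subset of $X_\omega$, so the nearest-point projection is $1$-Lipschitz and fixes $T$.

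\textbf{The gap.} It lies in the step you flag yourself, which is exactly the nontrivial content of the cited theorem. For a minimizing sequence with $\M(V_i)\le \F_X(T)+\epsilon_i$, your differential inequality only becomes $f(r)\le C_k f'(r)^{(k+1)/k}+\epsilon_i$. This forces growth only at points whose balls already carry mass of order $\epsilon_i$, and the thin remainder of $V_i$ is uncontrolled in location. \emph{Repeatedly cutting out low-density balls and refilling} does not remove it, for three reasons:
\begin{itemize}
\item each surgery saves a positive amount with no lower bound, so the budget $\epsilon_i$ does not bound the number of surgeries;
\item the refills are only mass-controlled and may themselves contain thin parts;
\item nothing shows that the iteration converges to a filling of $T$ with uniform density bounds.
\end{itemize}
What is needed is a modified minimizing sequence $V_i'$ with $\partial V_i'=T$ and $\M(V_i')\to\F_X(T)$, whose \emph{entire} supports are asymptotically uniformly compact. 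For example, it would suffice to have $\|V_i'\|(B(x,r))\ge c\,r^{k+1}$ for all $x\in\supp V_i'$ and $\lambda_i\le r<d(x,\supp T)$, with $\lambda_i\to 0$. Your weaker claim, uniform compactness only up to mass $\epsilon$, does not suffice. Ambrosio--Kirchheim compactness needs all of each $V_i$ realized in one space with tight mass measures, and discarding the remainder would destroy $\partial V_i=T$.

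\textbf{The compactness step.} As written, it also fails. Under the diagonal embedding $X\hookrightarrow X_\omega$, any compact $K\subseteq X_\omega$ meets the closed subset $X$ in a compact subset of $X$. So the $V_i$ are no more tight in $X_\omega$ than in $X$, and Ambrosio--Kirchheim cannot be applied to them there. A compact space produced by Gromov's theorem also need not embed in $X_\omega$. The correct procedure is:
\begin{enumerate}
\item push the uniformly compact supports by varying isometric embeddings into a common compact space;
\item extract a limit current there;
\item identify the limit space with the ultralimit $\lim_\omega \supp V_i'\subseteq X_\omega$;
\item check that $\supp T$ is carried to its diagonal copy, so that $\partial V_\infty=T$ holds in $X_\omega$.
\end{enumerate}
Once the uniform compactness and this identification are supplied, your projection argument finishes the proof.
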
 

Hence, when dealing with compactly supported currents $T$, the notion of $\F_X(T)$ coincides with the infimum taken over all compactly supported fillings $V$.

\subsection{Asymptotic rank and asymptotic Nagata dimension}\label{subsec:2.5}
In this subsection we define and discuss the notions of asymptotic rank, asymptotic Nagata dimension, and filling functions. A description of the use of the Nagata condition here and in \cite{LSU26} is included.

The asymptotic rank as a unified notion of rank in the large was coined by Wenger \cite{Wen11}. For Hadamard spaces, it has the following equivalent definition. The Hadamard space $X$ has \textit{asymptotic rank} at least $m\geq 1$ if there is a sequence $(r_i)_{i\in \N}$ with $r_i>0$ and $r_i\to \infty$ as $i\to \infty$ and subsets $A_i\subseteq X$ such that the rescaled sets $(A_i, d/r_i)$ converge to the unit ball $B^m\subseteq \R^m$ in the Gromov--Hausdorff sense. This agrees with the Euclidean rank in case $X$ is proper and cocompact; see \cite[Theorem 3.4]{Wen11}, for instance. We adapt the convention from \cite[Proposition 3.1]{Wen11} that the asymptotic rank is $0$ if no such positive $m$ exists, which is the case if and only if $X$ is bounded; see \cite[p.~248]{Wen11}, for instance.

The asymptotic Nagata dimension makes the topological dimension in terms of coverings \textit{quantitative} by adding an additional control on the diameter and multiplicity of the covering sets. To describe it, we require a few more notions. A family $\mathscr B=(B_i)_{i\in I}$ of subsets in $X$ is called \textit{$D$-bounded}, $D>0$, if every $B_i$ has diameter at most $D$, and $\mathscr B$ has \textit{$s$-multiplicity} at most $n$ if every set in $X$ of diameter not larger than $s$ intersects no more than $n$ members of the family. We say that $X$ has \textit{linearly controlled asymptotic dimension} at most $n$ or \textit{asymptotic Nagata dimension} at most $n$ if there are $s_0,c>0$ such that for every $s>s_0$, the space $X$ admits a $cs$-bounded cover of $s$-multiplicity at most $n+1$. Therefore, the asymptotic Nagata dimension is encoded in a cover with data $(n,c,s_0)$. If the definition holds not only for $s>s_0$ but for all $s>0$, then $X$ has \textit{Nagata dimension} at most $n$. This notion was introduced by Assouad \cite{Ass82} and systematically studied by~\cite{LS05}.

 The covers of $X$ associated with an asymptotic Nagata dimension at most~$n$ can be assumed to be open (up to a change of the constant $c$); see \cite[Proposition 1.7]{DS07}. Therefore, after restricting such an open cover to a compact subset $K$, we obtain a finite-cover formulation with the same multiplicity. Indeed, we find $s_0, c_1>0$ such that for every $s\geq s_0$ there is a finite open $c_1s$-bounded cover of $K$ with $s$-multiplicity at most $n+1$.

 Assuming finite asymptotic Nagata dimension of $X$ is a convenient way to guarantee suitable controlled covers of compact supports of cycles in $X$ and sufficiently large scales~$s$. Scrutinizing \cite[Theorem 5.1, Theorem 5.3]{LSU26}, it appears that only a finite covering property for the cycle support at a specific scale $s$ is used. In contrast, their iterative decomposition in \cite[Section 6]{LSU26} requires a quantitative covering on multiple scales $s_1,s_2, s_3,\dots$ with a \textit{uniform} covering dimension $n$ and constant $c$. Since our approach does not require a multiscale decomposition, it suffices to work at one common scale, though in our case the full filling supports and not merely the cycle supports have to be covered.

 Finally, in terms of the asymptotic rank $\nu$, we state Wenger's sub-Euclidean growth result for the filling volume function for dimensions $k\geq \nu$. Recall that the filling volume function for $r>0$ is defined by $$\FV_{k+1}^X(r)\coloneqq \sup\{\F_X(T): T\in \I_{k}(X), \partial T=0, \M(T)\leq r\}\,.$$ 

Before stating the theorem, we note that a subset $K$ of a metric space~$X$ is called \textit{$q$-quasiconvex}, $q\geq 1$, if all $x,y\in K$ can be connected by a curve in $K$ of length at most $q d(x,y)$. We say that $X$ is \textit{quasiconvex} if it is $q$-quasiconvex for some $q\geq 1$.

\begin{theorem}[{\cite[Theorem 1.2]{Wen11}}] \label{thm:wen_subeuc}Let $X$ be a complete, quasiconvex metric space of asymptotic rank $\nu$ admitting cone-type inequalities \eqref{eq:cone_type} for boundedly supported $m$-cycles with $m\in \{1,\dots,k\}$. If $k\geq \max\{\nu,1\}$, then 
    \begin{equation*}
    \limsup_{r\to \infty} \frac{\FV_{k+1}^X(r)}{r^{\frac{k+1}k}}=0\,.
\end{equation*}
\end{theorem}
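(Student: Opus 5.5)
The plan is to argue by contradiction via rescaling and an asymptotic cone. Suppose the conclusion fails. Then there are $\epsilon>0$, radii $r_i\to\infty$ and cycles $T_i\in\I_k(X)$ with $\M(T_i)\le r_i$ and $\F_X(T_i)\ge \epsilon\, r_i^{(k+1)/k}$. Rescale the metric by $\lambda_i\coloneqq r_i^{-1/k}$, writing $X_i\coloneqq(X,\lambda_i d)$. Then $\M(T_i)\le 1$ in $X_i$, while $\F_{X_i}(T_i)\ge\epsilon$. The spaces $X_i$ are still complete and quasiconvex with the same constant, and they satisfy the cone-type inequalities \eqref{eq:cone_type} with the same constants, since both notions are scale invariant. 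By the cone-type inequalities one also has the Euclidean isoperimetric inequality $\F_{X_i}(T)\le C\,\M(T)^{(k+1)/k}$ uniformly in $i$. So the goal is to show that, in the limit, the cycles $T_i$ become fillable with arbitrarily small mass, which contradicts the lower bound $\epsilon$.

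\textbf{Step 1 (cut into pieces of bounded diameter).} First I reduce to cycles of uniformly bounded diameter. Using the cone inequalities (Wenger's decomposition argument), write each $T_i$ as a sum of cycles $T_i=\sum_j T_{i,j}$ with $\sum_j\M(T_{i,j})\le C$. Each piece $T_{i,j}$ either has small mass, in which case the Euclidean isoperimetric inequality makes its filling volume negligible in the sum, or has support of diameter $\le D$ with a lower density bound at scales $\le D$. By superadditivity of the exponent $(k+1)/k$, at least one thick piece must satisfy $\F_{X_i}(T_{i,j})\ge\epsilon'\,\M(T_{i,j})^{(k+1)/k}$ with $\M(T_{i,j})$ bounded below. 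After relabeling, we have cycles $T_i$ of mass in $[m_0,1]$, diameter $\le D$, uniform lower density bounds, and $\F_{X_i}(T_i)\ge\epsilon'$.

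\textbf{Step 2 (pass to the ultralimit).} Fix basepoints $p_i\in\supp T_i$ and a nonprincipal ultrafilter $\omega$, and let $X_\omega$ be the ultralimit of $(X_i,p_i)$. Since $X$ is $\mathrm{CAT}(0)$, in the setting of the paper $X_\omega$ is a complete $\mathrm{CAT}(0)$ space. The density bounds give uniform compactness of the supports. Wenger's compactness theorem for integral currents in ultralimits, via isometric embedding into a common space, then yields a limit cycle $T_\omega\in\I_k(X_\omega)$. A key point of this construction is that $T_i$ and the image of $T_\omega$ have flat distance tending to $0$ in a common space.

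\textbf{Step 3 (no $(k+1)$-dimensional currents in the limit).} Since $k+1>\nu$, the space $X_\omega$ contains no bi-Lipschitz image of a positive-measure subset of $\R^{k+1}$. Otherwise, rescaled subsets of $X$ would Gromov--Hausdorff approximate a Euclidean $(k+1)$-ball, forcing asymptotic rank $\ge k+1$. By Kirchheim's rectifiability theory and the Ambrosio--Kirchheim structure theorem, this gives $\I_{k+1}(X_\omega)=0$. Since $X_\omega$ also satisfies the cone inequality, the bounded cycle $T_\omega$ has a filling in $\I_{k+1}(X_\omega)=0$, hence $T_\omega=0$.

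\textbf{Step 4 (transfer back, the main obstacle).} Now $T_i\to0$ in flat distance. I must turn this into $\F_{X_i}(T_i)\to0$, inside $X_i$ rather than in the common ambient space. The plan is to use quasiconvexity together with Lipschitz extension and retraction arguments (again Wenger's technique) to move small-mass flat fillings back into $X_i$ with controlled mass. The remaining small-mass boundary term is then filled by the uniform Euclidean isoperimetric inequality, giving filling mass $o(1)$. This contradicts $\F_{X_i}(T_i)\ge\epsilon'$.

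I expect this last transfer, together with the thick–thin decomposition in Step 1 that prevents mass from escaping to infinity in the limit, to be the delicate part of the argument.
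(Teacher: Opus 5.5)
The paper does not prove this statement; it quotes it from \cite{Wen11}. Your sketch therefore has to stand on its own, and it has a genuine gap at Step~4. That step is where the real content of the theorem lies: Steps~1--3 are soft compactness plus the definition of asymptotic rank.

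The implication ``$T_\omega=0\Rightarrow\F_{X_i}(T_i)\to0$'' is only announced, and the mechanism you propose for it is not available under the hypotheses. The small flat fillings given by the compactness theorem live in the auxiliary space $Z$. Nothing in the hypotheses, not even the $\mathrm{CAT}(0)$ condition, provides a Lipschitz map from a neighbourhood of $\supp T_i$ in $Z$ into $X_i$ that is the identity on $\supp T_i$ and has a constant independent of $i$. If you only extend from a finite $\epsilon$-net, the Lipschitz constant $L(\epsilon)$ depends on the multiplicity of the net at scale $\epsilon$. Comparing $T_i$ with its transferred copy (by a geodesic homotopy, say, as in \eqref{eq:mass1}) then costs mass of order $\epsilon\,L(\epsilon)^{k+1}$, which need not tend to $0$ as $\epsilon\to0$. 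Uniform control of exactly this constant is what finite asymptotic Nagata dimension supplies in Lemma~\ref{lem:trans}, and it is not among the hypotheses of the statement.

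The same difficulty reappears if you instead pass (quasi-)minimal fillings $V_i$ of $T_i$ to the limit. Their lower density bounds keep the supports from degenerating, but to conclude that the limit current is nonzero you must rule out cancellation, which again requires building competitors inside $X_i$. What is missing is an argument that produces fillings in $X_i$ from information about the limit, with constants independent of $i$. This is presumably where quasiconvexity and the cone-type inequalities in \emph{all} dimensions $1,\dots,k$ have to do real work, and your sketch never uses them in an essential way.

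There are also smaller points. Step~3 uses that $X_\omega$ itself satisfies a cone-type inequality, to get $T_\omega=0$ from $\I_{k+1}(X_\omega)=0$, and you take this from $X_\omega$ being $\mathrm{CAT}(0)$. For the general complete quasiconvex spaces in the statement it is not clear that cone-type inequalities pass to ultralimits. So the argument at best addresses the $\mathrm{CAT}(0)$ case, which is all the paper needs but less than what is stated. In Step~1, the thick part of the thick--thin decomposition (Theorem~\ref{thm:wen_thick}) has lower density bounds and totally bounded support but no diameter bound. Before taking a based ultralimit you must therefore split it into clusters whose mutual distances tend to infinity. Likewise, metric differentiation in Step~3 yields normed rather than Euclidean balls. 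This is harmless only after invoking the $\mathrm{CAT}(0)$ four-point condition.
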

Put differently, $X$ satisfies a sub-Euclidean isoperimetric inequality. Subject to the assumptions in Theorem \ref{thm:1}, we upgrade this sub-Euclidean growth to a linear one. 

\section{Discretization and deformation}\label{sec:3}

In this section, we present the main two ingredients for our proof of Proposition \ref{prp:1}. This includes a discussion of their metric compatibility.

\subsection{Passing to a simplicial complex}
The following approximation step from \cite{LSU26} will be substantial for the proof and is also the bottleneck for further potential improvements on the Nagata assumption in Theorem \ref{thm:1}; see also \cite[Proposition 6.1]{BWY23}.

\begin{lemma}[{\cite[Lemma 2.1]{LSU26}}]\label{lem:trans}
    Let $X$ be a complete $\mathrm{CAT}(0)$-space. Assume that there are constants $c,s>0$, an integer $n$, and $Y\subseteq X$ a subset with a finite $cs$-bounded covering and $s$-multiplicity at most $n+1$. Then there exists a finite simplicial complex $(\Sigma, d_2)$ of dimension at most $n$, realized as a subcomplex of some simplex of edge length~$s$ in Euclidean space, and maps $$\psi:Y\to \Sigma\qquad \text{and}\qquad  \phi:\Sigma\to X$$ such that for a constant $L=L(n,c)$
    \begin{enumerate}
        \item[(1)] $\psi$ is $L$-Lipschitz continuous,
        \item[(2)]  $\phi$ is $L$-Lipschitz continuous on each simplex,
        \item[(3)] $f\coloneqq \phi\circ\psi$ is $L$-Lipschitz continuous and $d(y,f(y))\leq Ls$ for $y\in Y$.
    \end{enumerate}
Here $d_2$ denotes the Euclidean metric of the ambient space.
    
\end{lemma}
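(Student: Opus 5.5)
This lemma is quoted from \cite[Lemma 2.1]{LSU26}. I would prove it via the standard nerve construction for Nagata-type covers together with barycentric maps in the $\mathrm{CAT}(0)$-space $X$.

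\emph{Construction of $\psi$.} Let $\mathscr B=(B_i)_{i\in I}$ be the finite $cs$-bounded cover of $Y$ with $s$-multiplicity at most $n+1$. Let $\Delta$ be the Euclidean simplex of edge length $s$ whose vertices $e_i$ are indexed by $I$, and let $\Sigma\subseteq\Delta$ be the nerve of $\mathscr B$. Concretely, the vertices $e_{i_0},\dots,e_{i_m}$ span a face of $\Sigma$ whenever the sets $B_{i_0},\dots,B_{i_m}$ are all met by some subset of $Y$ of diameter at most $s$. By the multiplicity bound, $\dim\Sigma\le n$. Define the cut-off functions
\[
\tilde\lambda_i(y)=\max\{0,\, s/2-d(y,B_i)\}, \qquad \lambda_i=\tilde\lambda_i\Big/\sum_j\tilde\lambda_j,
\]
and set $\psi(y)=\sum_i\lambda_i(y)e_i$.
\begin{itemize}
\item The denominator satisfies $\sum_j\tilde\lambda_j(y)\ge s/2$, because $y$ lies in some $B_j$.
\item At most $n+1$ of the $\tilde\lambda_i$ are nonzero near any point, since the ball of radius $s/2$ around $y$ meets at most $n+1$ sets.
\item Each $\tilde\lambda_i$ is $1$-Lipschitz.
\end{itemize}
The quotient rule then gives that each $\lambda_i$ is $C(n)/s$-Lipschitz. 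Because edges have length $s$, it follows that $\psi$ is $L(n)$-Lipschitz into $(\Sigma,d_2)$, and its image lies in $\Sigma$.

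\emph{Construction of $\phi$.} Pick a point $x_i\in B_i$ for each $i$ and set $\phi(e_i)=x_i$. Extend $\phi$ simplexwise by iterated geodesic interpolation: on a face with vertices $e_{i_0},\dots,e_{i_m}$, define $\phi$ inductively on dimension by joining the image of a point of the opposite face to $x_{i_m}$ with the geodesic $[\,\cdot\,,x_{i_m}]$. An alternative is the $\mathrm{CAT}(0)$ barycenter of the measure $\sum\lambda_j\delta_{x_j}$, which is Lipschitz in the weights with constant comparable to the diameter of the point set. For vertices of one face, the sets $B_{i_j}$ meet a common set of diameter at most $s$, so $d(x_{i_j},x_{i_l})\le (2c+1)s$. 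Busemann convexity, used as in Subsection~\ref{subsec:2.3}, then gives that $\phi$ is Lipschitz on each simplex. The constant is at most $C(n)(2c+1)s/s$, because moving a Euclidean distance $\epsilon$ inside a face of edge length $s$ changes the weights by $O(\epsilon/s)$. This proves (1) and (2).

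\emph{Property (3).} Fix $y$. Every index $i$ with $\lambda_i(y)>0$ satisfies $d(y,B_i)<s/2$, hence $d(y,x_i)\le cs+s/2$. Any point in the convex hull of these $x_i$ is therefore within $(c+1)s$ of $y$, by convexity of balls in $\mathrm{CAT}(0)$-spaces. This gives $d(y,f(y))\le Ls$.

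The main obstacle is the global Lipschitz bound for $f=\phi\circ\psi$, because $\phi$ is only Lipschitz simplexwise and $(\Sigma,d_2)$ is not geodesic inside $\Sigma$ across non-adjacent faces. Split into two cases.
\begin{itemize}
\item If $d(y,y')\ge s$, the displacement bound gives $d(f(y),f(y'))\le d(y,y')+2Ls\le (1+2L)\,d(y,y')$.
\item If $d(y,y')<s$, the indices active at $y$ and at $y'$ all lie in one face of $\Sigma$, because $\{y,y'\}$ is a set of diameter less than $s$ meeting all those $B_i$ (after slightly adjusting the radius $s/2$ in the cut-off, e.g.\ to $s/4$). So $\psi(y)$ and $\psi(y')$ lie in a common simplex, where $\phi$ is $L$-Lipschitz and the segment between them stays inside it. Composing yields $d(f(y),f(y'))\le L^2\,d(y,y')$.
\end{itemize}
Enlarging $L$ to cover both cases finishes the proof.
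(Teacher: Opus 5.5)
The paper gives no proof of this lemma; it is quoted from \cite{LSU26}. Your construction is the standard one: the nerve of the cover inside an edge-$s$ simplex, a normalized cut-off partition of unity for $\psi$, and a geodesic or barycentric extension for $\phi$. Parts (1), (2) and the displacement bound in (3) work. For (2), if you use the iterated geodesic extension, note that the weights of the radially projected point on the opposite face are not Lipschitz near the apex. So ``weights change by $O(\epsilon/s)$'' only works after compensating with the factor $(1-t_m)$. The barycenter variant, using Sturm's $W_1$-contraction of the barycenter map, avoids this issue.

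The step that fails as written is the near case of the global Lipschitz bound for $f$. An index $i$ active at $y$ only satisfies $d(y,B_i)<r$, where $r$ is the cut-off radius, so $\{y,y'\}$ need not meet $B_i$ at all. The actual witness set is $\{b_i\}\cup\{b_j'\}$ with $b_i\in B_i$ within $r$ of $y$ and $b_j'\in B_j$ within $r$ of $y'$. Its diameter is only bounded by $2r+d(y,y')$, which is $\le s$ only when $d(y,y')\le s-2r$, not for all $d(y,y')<s$. Shrinking $r$ to $s/4$ does not rescue the threshold $s$.

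Here is a counterexample. Take $Y=[-10s,10s]\subseteq\R$, covered by $B_1=[-10s,-0.24s]$, $B_3=(-0.24s,1.14s)$ and $B_2=[1.14s,10s]$. This cover is $10s$-bounded with $s$-multiplicity $2$. Let $r=s/4$, $y=0$ and $y'=0.9s$. Then the indices $1,3$ are active at $y$ and $2,3$ at $y'$. However, $\{e_1,e_2\}$ is not an edge of the nerve, since $d(B_1,B_2)>s$. So $\psi(y)$ and $\psi(y')$ lie in no common simplex, even though $d(y,y')<s$.

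The repair is immediate: split at $s-2r$ instead of $s$, for example at $s/2$ with $r=s/4$. If $d(y,y')\ge s/2$, the displacement bound $d(x,f(x))\le L's$ gives $d(f(y),f(y'))\le d(y,y')+2L's\le(1+4L')\,d(y,y')$. If $d(y,y')<s/2$, the union of the active indices at $y$ and $y'$ is a face, and your common-simplex argument gives the constant $L^2$.
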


\subsection{Ambient vs.~length metric}
\label{subsec:3.2} Above we use the ambient metric $d_2$ on~$\Sigma$. However, to apply the deformation theorem below on the full simplicial complex $\Sigma$, as opposed to \cite{LSU26}, we need to take currents to the intrinsic length metric space. 

Assume that $\Sigma$ is path-connected. Besides the extrinsic metric, we can equip $\Sigma$ with the intrinsic length metric $d_{\ell}$ induced by the Euclidean metric. Let $$\iota: (\Sigma,d_2)\to (\Sigma,d_{\ell})$$ be the identity. It always holds that $d_2\leq d_{\ell}$ on $\Sigma$ with equality on each simplex of $\Sigma$. By \cite[Lemma 2.1]{BWY23}, both metrics $d_2$ and $d_\ell$ induce the same topology on~$\Sigma$, and for every $x\in \Sigma$ there is a sufficiently small $d_2$-neighborhood of $x$ such that every sufficiently close $y$ lies with $x$ in a common simplex, and thus $d_2(x,y)=d_\ell(x,y)$ in this case. Hence, $\iota$ satisfies $\lip \iota (x)\leq 1$ for each $x\in \Sigma$, while it is an isometry when restricted to any simplex and Lipschitz continuous by compactness. Moreover, \eqref{eq:lip_gen} implies for a current $A$ that \begin{equation}
    \label{eq:ident_mass}\M(\iota_\# A)\leq \M(A)\qquad \text{and} \qquad \M(\iota_\# (\partial A))\leq \M(\partial A)\,.
\end{equation}
Note that $\phi$ is geodesic on $(\Sigma,d_\ell)$, and thus Lipschitz continuous by \cite[Lemma~2.4]{BWY23}.
 
If $\Sigma$ is not path-connected, all metric changes and deformations that are about to follow are performed componentwise and the resulting currents are summed by linearity. This holds in particular for the proof of Proposition~\ref{prp:1} in Section \ref{sec:4}. Therefore, we suppress an additional index for the component if there is no confusion.

Recall that if $\Sigma$ is the disjoint union of $N$ path-connected components $\Sigma_i$, $i\in \{1,\dots,N\}$, then we can split a current and its boundary componentwise
\begin{equation}
    \label{eq:split}T=\sum_{i=1}^N T\llcorner{\Sigma_i}\qquad \text{and}\qquad \partial T=\sum_{i=1}^N(\partial T)\llcorner{\Sigma_i}\,.
\end{equation}
Note that $\partial (T\llcorner\Sigma_i)=(\partial T)\llcorner{\Sigma_i}$ follows from \cite[Lemma 3.5]{Lan11}. Indeed, the finitely many path-connected components are open and compact, so the characteristic function of $\Sigma_i$ is both Lipschitz continuous and locally constant, and the desired formula follows from \cite[Lemma 3.5]{Lan11}. Note that the mass measure splits additively as well. 

\subsection{Relative deformations} \label{subsec:3.3} In \cite{BWY23}, Basso--Wenger--Young establish a Federer--Fleming-type deformation theorem (cf.~\cite{FF60,Fed69}) on general metric spaces under some convexity and triangulation assumptions.

We say that a metric space $X$ is \textit{$(n,D,\varepsilon)$-triangulated}, $D\geq 1$, $n\in \N$, $\varepsilon>0$, if
there exist an $n$-dimensional simplicial complex $\Sigma$ and a homeomorphism $p:\Sigma \to X$, which is $D$-bi-Lipschitz on every simplex. Here $\Sigma$ is equipped with a scaled Euclidean metric such that each simplex has side length $\varepsilon$. The map $p$ identifies integral simplicial chains in $\Sigma$ with their  associated integral currents in $X$.

Now we can state the theorem.

\begin{theorem}[{\cite[Theorem A.2, Remark A.6]{BWY23}}]
   \label{thm:deform} Let $X$ be a $q$-quasi\-convex, $(n,D,\varepsilon)$-triangulated metric space. Then there is a constant $C=C(q,n,D)>0$ such that for every $k\in \{1,\dots,n\}$ and every $T\in \I_{k}(X)$ there is an integral simplicial $k$-chain $P$ and currents $R\in \I_{k}(X)$ and $S\in \I_{k+1}(X)$
    such that $$T=P+R+\partial S\,.$$ Moreover, we have
 \begin{alignat}{2}
     \M (P)&\leq C\M(T)\,,
     &\qquad  \M(\partial P)
     &\leq C\M(\partial T)\,,\notag\\
      \M (S)&\leq \varepsilon C\M(T)\,,&\qquad \M (R)&\leq \varepsilon C\M(\partial T)\,. 
      \label{eq:deformest}
 \end{alignat}
 If $k>n$, then $\I_k(X)=0$.
\end{theorem}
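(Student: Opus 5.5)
The plan is to reduce everything to a classical Federer--Fleming pushing argument on the model complex $\Sigma$, and then transport the resulting decomposition back to $X$ through the triangulation $p$. After rescaling we may assume $\varepsilon=1$. The masses of $P$, $\partial P$ and $R$ are scale invariant relative to $\M(T)$ and $\M(\partial T)$ in the way required, while $\M(S)$ picks up exactly one factor of $\varepsilon$. This is why $\varepsilon$ appears linearly only in the bounds for $S$ and $R$ in \eqref{eq:deformest}.

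\textbf{Transfer to $\Sigma$.} First we compare metrics, using the intrinsic length metric $d_\ell$ on $\Sigma$ as in Subsection~\ref{subsec:3.2}. The map $p$ is $D$-bi-Lipschitz on each simplex, so $p$ is $D$-Lipschitz for $d_\ell$. In the other direction, $X$ is $q$-quasiconvex: any two points are joined by a curve of length at most $q\,d(x,y)$, and pulling this curve back simplex by simplex shows that $p^{-1}$ is $qD$-Lipschitz into $(\Sigma,d_\ell)$. Hence $p^{-1}_\#$ and $p_\#$ change masses of $k$- and $(k+1)$-currents by factors depending only on $q,n,D$, via \eqref{eq:lip_bdd}. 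So it suffices to prove the statement for $T\in\I_k(\Sigma,d_\ell)$ with a constant $C(n)$, and then push $P,R,S$ forward by $p$. Since $p$ identifies simplicial chains, $p_\#P$ is again simplicial.

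\textbf{Pushing to the $k$-skeleton.} We proceed from the top dimension $m=n$ down to $m=k+1$. On each $m$-simplex $\tau$ we choose a center $y_\tau$ in a fixed small concentric ball $B_\tau$. We then radially project $\tau\setminus\{y_\tau\}$ onto $\partial\tau$, keeping the lower skeleton fixed. These projections glue to a map $\pi$ on the complement of the centers, and $\pi$ is homotopic to the identity through the straight-line homotopy inside each simplex.

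The key estimate is the averaging bound
\[
\int_{B_\tau}\int_\tau |x-y|^{-j}\,\mathrm d\|A\|(x)\,\mathrm dy\le C(n)\,\|A\|(\tau)
\]
for $j\le k<m$ and $A\in\{T,\partial T\}$. This uses that $|x-y|^{-j}$ is integrable in $y$ over an $m$-dimensional ball when $j<m$. By Chebyshev's inequality, we can choose centers for which both $\M(\pi_\#T)\le C\M(T)$ and $\M(\pi_\#\partial T)\le C\M(\partial T)$ hold, simultaneously over all simplices of that dimension. The mass measure is local and the bound is simplexwise, so infinitely many simplices cause no problem. We interpret $\pi_\#$ via a slight truncation near the centers, or via slicing, as in \cite{BWY23}.

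The homotopy formula \eqref{eq:hash2}, in its version with boundary, gives
\[
T-\pi_\#T=\partial H_\#(\llbracket0,1\rrbracket\times T)+H_\#(\llbracket0,1\rrbracket\times\partial T).
\]
The track of every point has length at most $\operatorname{diam}\tau\lesssim 1$. A mass bound analogous to \eqref{eq:mass1}, with the same averaging controlling the local Lipschitz constants, then gives
\[
\M(H_\#(\llbracket0,1\rrbracket\times T))\lesssim \M(T),\qquad \M(H_\#(\llbracket0,1\rrbracket\times\partial T))\lesssim\M(\partial T).
\]
The first term contributes to $S$ and the second to $R$, after restoring $\varepsilon$. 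Iterating down to $m=k+1$ yields a current $T'$ supported in the $k$-skeleton, with $\partial T'$ supported in the $(k-1)$-skeleton.

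\textbf{Constancy and the final step.} On each open $k$-simplex $\sigma$, the current $T'$ is integral with zero boundary. By the constancy theorem it therefore equals $n_\sigma\llbracket\sigma\rrbracket$ with $n_\sigma\in\Z$. Hence $P:=T'$ is simplicial, and the bounds $\M(P)\le C\M(T)$ and $\M(\partial P)\le C\M(\partial T)$ follow from the step-by-step estimates. For the case $k>n$, the space is locally an $n$-dimensional polyhedron, and $k$-rectifiable sets in it have zero $\mathcal H^k$-measure, which gives $\I_k=0$.

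I expect the main obstacle to be the pushing step. There we must make the radial projections, which are not Lipschitz near the centers, rigorous on metric currents, and we must control $T$ and $\partial T$ simultaneously with one choice of centers. For this I would follow the slicing and approximation approach of \cite[Appendix A]{BWY23} rather than redo it.
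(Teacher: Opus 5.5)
The paper does not reprove this statement; it imports it from \cite[Theorem~A.2, Remark~A.6]{BWY23}. Your first steps are the standard Federer--Fleming route and are fine modulo the technicalities you defer to \cite{BWY23}: the transfer to $(\Sigma,d_\ell)$ with $p$ being $D$-Lipschitz and $p^{-1}$ being $qD$-Lipschitz, the rescaling to $\varepsilon=1$, and the radial projections from centers of the $m$-simplices, $m=n,\dots,k+1$, chosen by averaging and Chebyshev.

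\textbf{The gap is the last step.} Your claim that after the iteration $\partial T'$ is supported in the $(k-1)$-skeleton is false. Every projection you use fixes the $k$-skeleton pointwise, so $\partial T'=\pi_\#\partial T$ is only a $(k-1)$-current in the $k$-skeleton, and in general it meets the open $k$-simplices. Then $T'\llcorner\sigma$ has nonzero boundary inside the open simplex $\sigma$, and the constancy theorem does not apply.

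Concretely, let $T$ be a small $k$-disk in the interior of a $k$-simplex, or take $k=n$, where your iteration is empty. All your homotopies are constant on $\supp T$, so you get $S=R=0$ and $T'=T$, which is not a simplicial chain. Moreover, for a sufficiently small disk the bound $\M(P)\le C\M(T)$ forces $P=0$. Hence every admissible decomposition has $R\neq 0$; for example $R=T$ is allowed, since $\M(T)\lesssim\varepsilon\M(\partial T)$. But every term your construction puts into $R$ vanishes in this example.

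\textbf{The missing idea} is one further projection, applied to the boundary only.
\begin{itemize}
\item Choose centers $y_\sigma$ in small concentric balls of the $k$-simplices. This is admissible because $\partial T'$ has dimension $k-1<k$, so $|x-y|^{-(k-1)}$ is integrable over $k$-balls.
\item Let $\pi_k$ be the radial projection into the $(k-1)$-skeleton, and let $G$ be the straight-line homotopy from the identity to $\pi_k$.
\item Since $\partial T'$ is a cycle, $\partial G_\#(\llbracket 0,1\rrbracket\times\partial T')=(\pi_k)_\#\partial T'-\partial T'$.
\item Hence $P:=T'+G_\#(\llbracket 0,1\rrbracket\times\partial T')$ lies in the $k$-skeleton, and $\partial P=(\pi_k)_\#\partial T'$ lies in the $(k-1)$-skeleton.
\end{itemize}
Only now does constancy give $P=\sum_\sigma n_\sigma\llbracket\sigma\rrbracket$. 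The term $-G_\#(\llbracket 0,1\rrbracket\times\partial T')$, of mass $\lesssim\varepsilon\M(\partial T)$, is what really belongs in $R$.

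Note that this naive version only yields $\M(P)\le C(\M(T)+\varepsilon\M(\partial T))$. To reach $\M(P)\le C\M(T)$ as stated, you must also select the $y_\sigma$ so as to control $n_\sigma$. One way: for a.e.\ $y_\sigma$ the swept current has zero $k$-density at $y_\sigma$, so $n_\sigma$ equals the multiplicity of $T'$ at $y_\sigma$. Averaging over $y_\sigma$ then gives $|n_\sigma|\,\varepsilon^k\lesssim\|T'\|(\sigma)$.
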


For applying Theorem \ref{thm:deform} in the next section, we explain why it can be applied componentwise (if necessary). The situation is particularly simple for the simplicial complex $\Sigma$ from Lemma \ref{lem:trans}. Indeed, under the identification $\iota$, $\Sigma$ is geodesic with respect to $d_\ell$ on each path-connected component, and thus $1$-quasiconvex. Moreover, for $n_i\coloneqq \dim \Sigma_i$, each path-connected component $(\Sigma_i,d_\ell|_{\Sigma_i})$ is $(n_i,1,s)$-triangulated with homeomorphism $p_i\coloneqq \iota|_{\Sigma_i}$. Thus, assuming $C$ grows in $n$ without loss of generality, the deformation constant $C(1,n,1)>0$ can be chosen uniformly in $i$ as $n_i\leq n$. 
 
\section{Proof of the proposition}\label{sec:4}

In this section, we give a proof of the crucial Proposition \ref{prp:1} in five consecutive steps, as described in Subsection \ref{subsec:proof}. 

\textit{Step 1:} Let $T\in \I_{k,c}(X)$ be a cycle. By Theorem \ref{thm:wen_ex}, there exists a minimal filling $V\in \I_{k+1,c}(X)$, that is, $$\partial V=T \qquad \text{and} \qquad \M(V)=\F_X(T)\,.$$ 
    
Without loss of generality, we may assume that $\F_X(T)>0$. Otherwise, $V=0$ as a current and hence $T=\partial V=0$, so taking $Q=S=W=0$ satisfies the assertion. 
    
    By compactness of $\supp V$ and the open cover formulation for the asymptotic Nagata dimension at most $n$, the discussion in Subsection \ref{subsec:2.5} provides $s_0, c_1>0$ such that for every $s\geq s_0$ there is a finite open $c_1s$-bounded cover of $\supp V$ with $s$-multiplicity at most~$n+1$. Applying Lemma \ref{lem:trans}, we obtain a finite simplicial complex $\Sigma$ of dimension at most~$n$, a constant $L=L(n,c_1)$, and $L$-Lipschitz functions $\psi:\supp V\to \Sigma$ and $\phi:\Sigma\to X$ (simplexwise) such that $\phi\circ\psi$ is $L$-Lipschitz continuous, that is, $d(x, \phi\circ\psi(x))\leq Ls$ for all $x\in \supp V$.

 Recall that whenever $\Sigma$ is not path-connected, all computations on $\Sigma$ are carried out on each path-connected component and then summed according to \eqref{eq:split}. 

    \textit{Step 2:} Here we deform the filling $V$ after pushing it to $\Sigma$. We define $$V'\coloneqq \iota_\#\psi_\# V\in \I_{k+1,c}(\Sigma,d_\ell) \quad \text{and} \quad T'\coloneqq \partial V'= \iota_\#\psi_\# T\in \I_{k,c}(\Sigma,d_\ell)\,.$$ As $\psi$ maps $L$-Lipschitz continuously into $(\Sigma,d_2)$ and the mass does not increase via $\iota$ as given in \eqref{eq:ident_mass}, the Lipschitz mass bound \eqref{eq:lip_bdd} implies \begin{equation}\label{eq:preimage_est}
        \M(V')\leq L^{k+1} \F_X(T)\qquad \text{and}\qquad \M(T')\leq L^k \M(T)\,.
    \end{equation}

 If $k+1\leq \dim\Sigma$, Theorem \ref{thm:deform} can be applied since each component of $\Sigma$ is a $1$-quasiconvex, $(n,1,s)$-triangulated metric space; cf.~Subsection \ref{subsec:3.2} and \ref{subsec:3.3}. This provides a decomposition $$V'=P+R+\partial D$$ into an integral simplicial $(k+1)$-chain $P$, a current $R\in \I_{k+1,c}(\Sigma)$, and the boundary current of $D\in \I_{k+2,c}(\Sigma)$. This result also provides the estimates~\eqref{eq:deformest}. We will make use of two of them in the form
 \begin{equation}
     \M (P)\leq C_1\M(V')\qquad\text{and}
     \qquad \M (R)\leq C_1s\M(T')\,, 
      \label{eq:deformest_appl}
 \end{equation}
 for some constant  $C_1=C_1(n)=C(1,n,1)>0$, which is uniform over all path-connected components of $\Sigma$; cf.~Subsection \ref{subsec:3.3}.
 
 As $\partial^2 D=0$, we have \begin{equation*}
     T'=\partial P+\partial R\,.
 \end{equation*} If $k+1\leq \dim\Sigma$ does not hold, then $V'=0$ since a $(k+1)$-dimensional integral current cannot be supported on a finite complex of dimension less than $k+1$ (cf.~last part of Theorem \ref{thm:deform}), and thus $T'=\partial V'=0$, so $P=R=D=0$ gives a trivial decomposition.

\textit{Step 3:} Next, we count the top-dimensional simplices. Write the finite simplicial chain~$P$ (uniquely) as $$P=\sum_\sigma n_\sigma \llbracket \sigma \rrbracket\,, \qquad n_\sigma\in \Z\,,$$ where we sum over all $(k+1)$-simplices of the finite simplicial complex $\Sigma$ after fixing one orientation for each simplex. If $\omega_{k+1}$ denotes the Euclidean $(k+1)$-volume of the regular, unit-edge $(k+1)$-simplex, then every $(k+1)$-simplex of $\Sigma$ has (intrinsic) volume $\omega_{k+1}s^{k+1}$. Since distinct open simplices are disjoint, we have \begin{equation}\label{eq:sumnsigma}
    \omega_{k+1}s^{k+1} \sum_\sigma |n_\sigma|=\M(P)\leq C_1L^{k+1}\F_X(T)\,,
\end{equation} where we used the first inequalities in \eqref{eq:deformest_appl} and \eqref{eq:preimage_est} in the last step.

 \textit{Step 4:} We now move $T'$ back to $X$. For the composition $f\coloneqq \phi \circ \psi$ from Step 1, we find \begin{equation}\label{eq:hash1}
     f_\#T=\phi_\# (\partial V')=\phi_\#(\partial P)+\partial(\phi_\#R)\,. \end{equation}
We want to compare $f_\# T$ with $T$. To this end, we consider the (geodesic) homotopy current $H$ from Subsection \ref{subsec:2.3}. Lemma \ref{lem:trans}, (3), gives $$d(x,f(x))\leq Ls\,,$$ so \eqref{eq:mass1} implies the mass bound \begin{equation}
    \M(H_{\#}(\llbracket0,1\rrbracket\times T))\leq (k+1) Ls\max\{1,L\}^k\M(T)\,.\label{eq:mass1_s}
\end{equation}

Set $$Q\coloneqq\phi_\#(\partial P)\qquad \text{and}\qquad S\coloneqq \phi_\# R-H_{\#}(\llbracket0,1\rrbracket\times T)\,.$$ Both currents have compact support because $\Sigma$ is compact and $\phi$, $f$, and $H$ are continuous. Since $\partial^2 P=0$, the current $\phi_\#(\partial P)$ is a cycle. Therefore, we apply \eqref{eq:hash1} and \eqref{eq:hash2} to find $$\partial S=\phi_\# (\partial R)-(f_\# T-T)= T-Q\,,$$ which is nothing but \eqref{eq:decomp1}. 
Using the second inequalities in 
\eqref{eq:deformest_appl} and \eqref{eq:preimage_est}, we find \begin{equation}\label{eq:mass2}
    \M(R)\leq C_1s\M(T')\leq C_1 s L^k \M(T)\,.
\end{equation} Using  \eqref{eq:lip_bdd} once more gives $\M(\phi_\# R)\leq L^{k+1} \M(R)$, and the previous mass estimates \eqref{eq:mass1_s} and \eqref{eq:mass2} lead to
\begin{align*}
    \M(S) &\leq \M(\phi_\# R)+\M(H_{\#}(\llbracket0,1\rrbracket\times T))\notag\\ &\leq (L^{k+1} C_1 L^k +(k+1) L\max\{1,L\}^k) s\M(T)\eqqcolon a_0 s\M(T)\,,
\end{align*}
 proving \eqref{eq:decomp2}.

 \textit{Step 5:} Finally, we fill the boundaries of the top-dimensional simplices pushed to $X$. For any given $(k+1)$-simplex define the compactly supported integral $k$-cycle $$U_\sigma\coloneqq \phi_\# (\partial \llbracket \sigma\rrbracket)\,.$$ The boundary of a $(k+1)$-simplex consists of $k+2$ regular $k$-simplices. As each of the faces has mass $\omega_{k}s^k$ and restricts $\phi$ to an $L$-Lipschitz function, we observe that
 $$\M(U_\sigma)\leq (k+2)L^k \omega_k s^k\eqqcolon c_0 s^k\,.$$
 Note that $\F_X(U_\sigma)\leq \FV_{k+1}^X(c_0s^k)$ by definition. Applying Theorem \ref{thm:wen_ex} to $U_\sigma$ gives a minimizing filling $W_\sigma$ with $$\partial W_\sigma=U_\sigma\qquad \text{and}\qquad \M(W_\sigma)\leq \FV_{k+1}^X(c_0s^k)\,.$$ We set $$W\coloneqq \sum_\sigma n_\sigma W_\sigma\,.$$ As there are only finitely many simplices, $W$ is an integral current with compact support and $$\partial W=\sum_\sigma n_\sigma U_\sigma= \phi_\# \left(\partial \left(\sum_\sigma n_\sigma \llbracket \sigma\rrbracket\right)\right)= \phi_\# (\partial P)=Q\,.$$ Thus, $W$ is a filling of $Q$. Then \eqref{eq:sumnsigma} and the minimizing filling property imply \begin{align*}
     \M(W)&\leq \sum_\sigma |n_\sigma|\M(W_\sigma)\\&\leq \frac{C_1 L^{k+1}\F_X(T)}{\omega_{k+1}s^{k+1}} \FV_{k+1}^X(c_0s^k)\eqqcolon b_0\frac{\FV_{k+1}^X(c_0s^k)}{s^{k+1}} \F_X(T)\,, 
 \end{align*}
which proves \eqref{eq:decomp3} and thus concludes the proof of the proposition.

\begin{remark}\label{rmk}
Note that all constants $a_0,b_0,c_0,s_0$ depend only on $k$ and the $(n,c_1,s_0)$-Nagata cover as $$a_0= C_1 L^{2k+1} +(k+1) L\max\{1,L\}^k\,,\quad b_0= \frac{C_1 L^{k+1}}{\omega_{k+1}}\,, \quad c_0=(k+2)L^k \omega_k $$
with $L=L(n,c_1)$ the Lipschitz constant from Lemma \ref{lem:trans}, $C_1=C_1(n)$ the constant from the deformation Theorem \ref{thm:deform}, and $\omega_m$ the volume of the regular, unit-edge $m$-simplex, $m\in \{k,k+1\}$. 
\end{remark}

\section{Proof of the linear filling inequality} \label{sec:5}
In this section, we give a proof of Theorem \ref{thm:1} and \ref{thm:2}. Since Proposition~\ref{prp:1} uses compactly supported currents and the main theorems are stated for currents of finite mass, we need the following lemma, which promotes a linear filling inequality for compactly supported currents to a linear filling inequality for currents of finite mass, up to a change of constant.
\begin{lemma}[Promotion to integral currents of finite mass]\label{lem:promo} Let $X$ be a complete $\mathrm{CAT}(0)$-space, and let $k\geq 1$. If there is a $C>0$ such that every $S\in \I_{k,c}(X)$ with $\partial S=0$ admits a filling $V\in \I_{k+1,c}(X)$ with $\partial V=S$ and
\begin{equation}
\label{eq:lin_fill_ass}
\M(V)\leq C\M(S)\,,
\end{equation} then there is a $C'=C'(C,k)>0$ such that every $T\in \I_k(X)$ with $\partial T=0$ satisfies
\begin{equation}
    \label{eq:lin_fill_claim}
    \F_X(T)\leq C' \M(T)\,.
\end{equation}
\end{lemma}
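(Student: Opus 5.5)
The plan is to approximate a finite-mass cycle $T$ by compactly supported cycles, fill each piece linearly using \eqref{eq:lin_fill_ass}, and sum the fillings. The paper announces that this step relies on Wenger's thick-thin decomposition \cite[Theorem~4.2]{Wen11a}, which I will use as a black box.

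\emph{Decomposition.} The version I intend to invoke states the following. For $T\in\I_k(X)$ with $\partial T=0$, there are cycles $T_i\in\I_{k,c}(X)$, $i\in\N$, and a constant $C_k$ with
\[
T=\sum_i T_i\quad\text{(convergence in mass)}\qquad\text{and}\qquad \sum_i\M(T_i)\leq C_k\M(T)\,.
\]
If only a weaker form is available, with a thick part $T_1$ of compact support plus a remainder of small mass, I would iterate it. At stage $j$ I apply the decomposition to the remaining cycle, peel off a compactly supported piece, and arrange that the remainder has mass at most $2^{-j}\M(T)$. Each piece then has mass at most $C_k$ times the mass of the cycle it was split from, so the pieces have summable masses.

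\emph{Filling the pieces.} Each $T_i$ lies in $\I_{k,c}(X)$ and has $\partial T_i=0$. The hypothesis \eqref{eq:lin_fill_ass} therefore gives a filling $V_i\in\I_{k+1,c}(X)$ with $\partial V_i=T_i$ and $\M(V_i)\leq C\M(T_i)$. Set $V=\sum_i V_i$. The partial sums form a Cauchy sequence in mass, since $\sum_i\M(V_i)\leq CC_k\M(T)<\infty$. The limit $V$ has finite mass, and its boundary is $\sum_i T_i=T$ because the boundaries converge in mass as well.

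\emph{Main obstacle: is $V$ integral?} The set $\I_{k+1}(X)$ is not closed under mass convergence unless the boundaries are controlled. Here they are, since $\partial V=T$ has finite mass. The space of integer rectifiable currents of finite mass is complete under the mass norm, which I would justify through the Ambrosio--Kirchheim closure and rectifiability results \cite{AK00}. Hence $V\in\I_{k+1}(X)$, and
\[
\F_X(T)\leq\M(V)\leq CC_k\M(T)\,,
\]
so \eqref{eq:lin_fill_claim} holds with $C'=CC_k$.

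The real work is the decomposition step: producing compactly supported cycle pieces whose masses are summable with a constant depending only on $k$ (and on $C$ through the iteration). In a $\mathrm{CAT}(0)$-space I expect to do this by slicing $T$ by distance functions to a point. Cone inequalities (Theorem~\ref{thm:wen_coning}) can then be used to cap off the thin parts, and the boundary terms created in this process are absorbed into the remainder.
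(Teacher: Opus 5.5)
There is a genuine gap in the decomposition step, and that step carries all the content of the lemma. The black box you state, a splitting of $T$ into \emph{compactly supported} cycles $T_i$ with $\sum_i\M(T_i)\le C_k\M(T)$, is not what \cite[Theorem~4.2]{Wen11a} provides. Your fallback does not match it either. The thick-thin decomposition (Theorem~\ref{thm:wen_thick}) yields $T=R+\sum_j T_j$, where the thin remainder $\sum_j T_j$ is \emph{not} small in mass: you only have $\sum_j\M(T_j)\le 3\M(T)$, and it can carry most of $T$. So there is no geometric decay to iterate on. Moreover, the thin pieces $T_j$ are only known to have \emph{bounded} support of small diameter, not compact support. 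Your slicing sketch has the same defect. Cutting $T$ with $d(p,\cdot)$ and capping off by cones produces cycles supported in balls. Since $X$ is not assumed proper, a cycle supported in a ball need not have compact support; in $\ell^2$, for example, take a mass-summable sum of small circles centred at the basis vectors $e_i$. The hypothesis \eqref{eq:lin_fill_ass} applies only to $\I_{k,c}(X)$, so it cannot be applied to your pieces.

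The missing idea is that the hypothesis is needed only for the thick part, and the thin parts are filled linearly for free. For $R$, the lower density bound \eqref{eq:low_bdd} holds at every point of $\supp R$ up to a fixed positive scale. Together with $\M(R)<\infty$, a packing argument (Lemma~\ref{lem:tot_bdd}) makes $\supp R$ totally bounded, hence compact. Then \eqref{eq:lin_fill_ass} gives $U$ with $\partial U=R$ and $\M(U)\le C\M(R)\le C\M(T)$. For the thin parts you have $\M(T_j)\le\frac32\gamma$ and $\mathrm{diam}(\supp T_j)\le D_2\gamma^{-1/k}\M(T_j)^{1/k}$. The cone inequality of Theorem~\ref{thm:wen_coning}, which needs only bounded support, therefore already gives
\[
\M(V_j)\le\frac{D_2\gamma^{-1/k}}{k+1}\M(T_j)^{1+\frac1k}\le\Bigl(\frac32\Bigr)^{\frac1k}\frac{D_2}{k+1}\M(T_j)\,,
\]
which is summable in $j$. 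Your concluding step is correct and is exactly how the paper finishes: the partial sums are Cauchy in mass with uniformly bounded boundary masses, and the closure theorem \cite[Theorem~8.5]{AK00} gives integrality.
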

The proof of this lemma is based on Wenger's thick-thin decomposition. Note that it does not rely on a finite asymptotic Nagata dimension and might be of independent interest. To keep focus on the proof of the main theorems, the proof of Lemma \ref{lem:promo} is postponed to Appendix \ref{app}.

Now we are in position to prove our main results.

\begin{proof}[Proof of Theorem \ref{thm:1}]
    We apply Proposition \ref{prp:1}. This gives $a_0,b_0,c_0,s_0>0$ such that every cycle $T\in \I_{k,c}(X)$ has a decomposition  $$T=Q+\partial S$$ with  $S\in \I_{k+1,c}(X)$ and $Q\in \I_{k,c}(X)$ and the properties \begin{equation}
        \M(S)\leq a_0 s \M(T)\qquad \text{and}\qquad
        \M(W)\leq b_0 \frac{\FV_{k+1}^X(c_0 s^k)}{s^{k+1}}  \F_X(T)\label{eq:assumpt}
    \end{equation} for a filling $W\in \I_{k+1,c}(X)$ of $Q$  and for all $s\geq s_0$. Since $\partial (S+W)=T$, we obtain by triangle inequality and \eqref{eq:assumpt} that
    \begin{equation}
        \label{eq:subaddbdd}
   \F_X(T)\leq \M(S+W)\leq \M(S)+\M(W)\leq a_0 s \M(T)+\alpha(s)  \F_X(T) \end{equation} with $\alpha(s)= b_0 s^{-(k+1)}\FV_{k+1}^X(c_0 s^k)$. The quantity $\alpha(s)$ tends to $0$ as $s\to\infty$ by assumption \eqref{eq:subEucl}. Fix $s_*\geq s_0$ such that $\alpha(s_*)\leq 1/2$. Then \eqref{eq:subaddbdd} gives \begin{equation*}       \F_X(T)\leq 2a_0 s_* \M(T)\,.\end{equation*} Applying Lemma \ref{lem:promo}, we can extend this linear filling inequality to cycles of finite mass, and thus complete the proof.
\end{proof} We stress that the final constant $2a_0s_*$ does not depend on $T$. Compare the inequality \eqref{eq:subaddbdd} given by $$ \F_X(T)\leq a_0 s \M(T)+\alpha(s)  \F_X(T)$$ with the previous control from \cite[Theorem 1.1]{LSU26} given by $$\F_X(T)\leq C_\delta \M(T)^{1+\delta}\,.$$  The absorbable filling term in the former estimate is the analytic gain of our approach.

As a consequence of Theorem \ref{thm:1}, we obtain a proof of Gromov's linear isoperimetric filling inequality for finite asymptotic Nagata dimension.

\begin{proof}[Proof of Theorem \ref{thm:2}]
    A complete $\mathrm{CAT}(0)$-space is geodesic (and thus $1$-quasiconvex) and satisfies cone-type inequalities in all dimensions by Theorem \ref{thm:wen_coning}. If $k\geq \max\{\nu, 1\}$, then Theorem \ref{thm:wen_subeuc} implies that \eqref{eq:subEucl} holds, and thus we can conclude by applying Theorem \ref{thm:1}. 
\end{proof}

\section{Applications of the linear filling inequality}\label{sec:6}
 In this section we discuss three immediate corollaries.
 
 Together with Wenger's Euclidean growth theorem \cite[Theorem 1.4]{Wen11} for $k<\nu$, Theorem \ref{thm:2} describes a jump in the growth rates of the filling function, which is a classical, group-theoretic invariant for behavior at large scales; see \cite{Gro93}. Put differently, the following characterization of the asymptotic rank in terms of the filling function holds.
\begin{corollary}[Another characterization of asymptotic rank]
  Let $X$ be a complete  $\mathrm{CAT}(0)$-space with finite asymptotic Nagata dimension and finite asymptotic rank $\nu\geq 1$. Then $$\nu=\min\{k\geq 1: \FV^X_{k+1}(r)=\mathcal  O(r)\}\,.$$
\end{corollary}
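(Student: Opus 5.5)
We prove the characterization by showing two inclusions: every $k\geq\nu$ (with $k\geq 1$) lies in the set on the right, and no $k$ with $1\leq k<\nu$ does. Together these give that the minimum exists and equals $\nu$.

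\emph{Upper bound.} By Theorem~\ref{thm:2}, for every integer $k\geq\max\{\nu,1\}=\nu$ there is a constant $C=C(X,k)$ such that $\F_X(T)\leq C\M(T)$ for every cycle $T\in\I_k(X)$. Taking the supremum over cycles with $\M(T)\leq r$ gives $\FV^X_{k+1}(r)\leq Cr$, that is, $\FV^X_{k+1}(r)=\mathcal O(r)$. In particular $k=\nu$ belongs to the set, so the set is nonempty and its minimum is at most $\nu$. This step is immediate.

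\emph{Lower bound.} Let $1\leq k<\nu$. We invoke Wenger's Euclidean growth theorem \cite[Theorem 1.4]{Wen11}. It states that for a complete, quasiconvex space of asymptotic rank $\nu$ satisfying cone-type inequalities up to dimension $k$, we have $\FV^X_{k+1}(r)\geq c\,r^{\frac{k+1}{k}}$ for some $c>0$ and all sufficiently large $r$; equivalently, $\limsup_{r\to\infty}\FV^X_{k+1}(r)/r^{\frac{k+1}{k}}>0$. Its hypotheses are verified exactly as in the proof of Theorem~\ref{thm:2}: $X$ is geodesic, hence $1$-quasiconvex, and Theorem~\ref{thm:wen_coning} supplies the cone-type inequalities. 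Since $\frac{k+1}{k}>1$, we get
$$\FV^X_{k+1}(r_i)/r_i\geq c\,r_i^{1/k}\to\infty$$
along a sequence $r_i\to\infty$. Hence $\FV^X_{k+1}(r)\neq\mathcal O(r)$, and $k$ is not in the set.

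The main point to check is that \cite[Theorem 1.4]{Wen11} really gives a lower bound along a sequence of radii tending to infinity, so that the limsup is positive. This is how Euclidean growth is stated there: one rescales the asymptotic Euclidean balls $A_i$ and obtains cycles whose fillings have volume $\gtrsim\M^{(k+1)/k}$ at scales $r_i\to\infty$. A positive limsup is all the argument needs, since it already rules out linear growth. No Nagata assumption is required for this direction.
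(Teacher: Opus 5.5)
Your proposal is correct and follows the paper's proof: Theorem~\ref{thm:2} makes $k=\nu$ admissible, and \cite[Theorem 1.4]{Wen11} excludes every $1\leq k<\nu$. One small point: a lower bound for all large $r$ implies a positive $\limsup$ but is not equivalent to it, so ``equivalently'' should read ``in particular''; your argument only uses the positive $\limsup$, so this does not matter.
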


\begin{proof}
     Theorem \ref{thm:2} in the form \eqref{eq:FV_asymp} shows that $k=\nu$ is admissible for the minimum. Then \cite[Theorem 1.4]{Wen11} shows that the minimum is at least $\nu$. This concludes the proof.
\end{proof}

Hence, for the given class of spaces, the asymptotic rank coincides with the first dimension in which the filling function exhibits linear growth. This corollary emphasizes the phase transition in filling behavior that occurs when the dimension of a $k$-cycle crosses the threshold of the asymptotic rank. 
\vspace{0.2cm}

 Our next consequence is a complementary positivity result for Cheeger's constant. Given a Hadamard manifold $M$ with uniformly bounded sectional curvature from above by $-K<0$, Yau \cite{Yau75} established a bound for \textit{Cheeger's constant} \cite{Che70} of the form 
 $$h(M)\coloneqq \inf_E \frac{\mathrm{Per}(E)}{\mathrm{vol}(E)}\geq (n-1)\sqrt K\,,$$ where the infimum is taken over all bounded $E$ of finite perimeter. The volume is denoted $\mathrm{vol}(E)$ and the perimeter $\mathrm{Per}(E)$. There is also an intimate relation between positivity of the Cheeger constant for normal covers of closed manifolds and the non-amenability of the deck group; see \cite{Bro81}. We also mention that, more recently, flow techniques have been used to establish a Cheeger-type bound for quotient spaces under pinched negative curvature; see \cite{CMW23}. In contrast, our corollary neither makes any assumptions on the uniformity of negative curvature nor on group actions, and it works for finite asymptotic rank and finite asymptotic Nagata dimension.

\begin{corollary}
    [Positive Cheeger constant]
    Let $M$ be an oriented Hadamard manifold of dimension $n\geq 2$ with finite asymptotic Nagata dimension and asymptotic rank at most $n-1$. Then there exists a constant $C>0$ such that every bounded finite-perimeter set $E\subseteq M$ satisfies \begin{equation}
        \label{eq:isop_chee}
    \mathrm{vol}(E)\leq C \mathrm{Per}(E)\,.\end{equation}  In particular, the associated Cheeger constant $h(M)$ and the bottom $\lambda(M)$ of the spectrum of the Laplace--Beltrami operator satisfy
    $$\lambda (M)\geq \frac{h(M)^2}{4}\geq \frac1{4C^2}\,.$$
\end{corollary}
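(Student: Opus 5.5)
Apply Theorem~\ref{thm:2} in dimension $k=n-1$ to the boundary current of $E$. Since $M$ is an oriented Hadamard manifold, it is a complete $\mathrm{CAT}(0)$-space. By hypothesis it has finite asymptotic Nagata dimension and asymptotic rank $\nu\leq n-1$, so every $k\geq\max\{\nu,1\}$ is admissible. Because $n\geq 2$, the choice $k=n-1$ satisfies $k\geq 1$ and $k\geq\nu$. Theorem~\ref{thm:2} then yields $C>0$ with $\F_M(T)\leq C\M(T)$ for every $(n-1)$-cycle $T\in\I_{n-1}(M)$.

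Given a bounded set $E$ of finite perimeter, set $U\coloneqq\llbracket E\rrbracket$, the current of integration over $E$ with respect to the orientation of $M$. This is an $n$-dimensional integer rectifiable current with multiplicity one, compactly supported (since $E$ is bounded and $M$ is proper), with $\M(U)=\mathrm{vol}(E)$. By De Giorgi's structure theory, $\partial U$ is integer rectifiable, carried by the reduced boundary with multiplicity one, and $\M(\partial U)=\mathrm{Per}(E)$. Hence $T\coloneqq\partial U\in\I_{n-1,c}(M)$ is a cycle, and Theorem~\ref{thm:2} supplies a filling $V\in\I_{n}(M)$ with $\partial V=T$ and $\M(V)\leq C\,\mathrm{Per}(E)$.

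The key step is to show that $V=U$, so that $\mathrm{vol}(E)=\M(U)=\M(V)\leq C\,\mathrm{Per}(E)$. The difference $U-V$ is an $n$-dimensional integral cycle of finite mass in the connected oriented $n$-manifold $M$. By the constancy theorem it equals $m\llbracket M\rrbracket$ for some integer $m$. Since $M$ is a noncompact Hadamard manifold of infinite volume (for $n\geq 1$) and $U-V$ has finite mass, we get $m=0$. The constancy theorem for metric integral currents of top dimension follows from the Euclidean one applied in local charts, since $\partial(U-V)=0$ forces the density to be locally constant.

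For the spectral statement, taking the infimum over $E$ gives $h(M)\geq 1/C$, and Cheeger's inequality $\lambda(M)\geq h(M)^2/4$ \cite{Che70} yields $\lambda(M)\geq 1/(4C^2)$. Cheeger's argument applies on complete noncompact manifolds via the bottom of the spectrum over compactly supported functions, using the coarea formula and the isoperimetric bound for bounded sets.

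The main obstacle is the dictionary between finite-perimeter sets and metric integral currents: identifying $\partial\llbracket E\rrbracket$ with mass exactly $\mathrm{Per}(E)$, and uniqueness of the filling via constancy. I would handle both locally in charts, where Ambrosio--Kirchheim currents agree with classical flat/integral currents, and reduce to the Euclidean theory.
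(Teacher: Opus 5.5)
Your proposal is correct and follows essentially the paper's proof. You identify $\M(\llbracket E\rrbracket)=\mathrm{vol}(E)$ and $\M(\partial\llbracket E\rrbracket)=\mathrm{Per}(E)$, apply Theorem~\ref{thm:2} with $k=n-1$ to $T=\partial\llbracket E\rrbracket$, use the constancy theorem to show $\llbracket E\rrbracket$ is the only filling, and conclude with Cheeger's inequality. The one minor difference is in ruling out a constant multiple of $M$: the paper uses that the minimizing filling has compact support (Theorem~\ref{thm:wen_ex}) in the noncompact manifold, whereas you use finite mass against the infinite volume of $M$, and both arguments work.
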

Since Hadamard $3$-manifolds have finite Nagata dimension $3$ (see \cite{JL22}), this immediately implies a positivity result for asymptotic rank at most $2$.

\begin{proof}Recall that in case $M$ is an oriented, $n$-dimensional manifold and $E\subseteq M$ is bounded and of finite perimeter, then $$\M(\llbracket E\rrbracket)=\mathrm{vol}(E)\qquad \text{and} \qquad \M(\partial \llbracket E\rrbracket)=\mathrm{Per}(E)\,,$$ which follows from \eqref{eq:E_form} and \cite[Proposition 2.15]{DeL16} applied locally in charts, for instance. Hence,  \eqref{eq:isop_chee} is just the linear filling inequality for $T=\partial \llbracket E\rrbracket$ and $k=n-1$. Indeed, the minimizing filling $V$ of $\partial \llbracket E\rrbracket$  exists due to \cite[Theorem 1.6]{Wen05} and is uniquely given by $\llbracket E\rrbracket$ due to the constancy theorem \cite{FF60,Fed69}. For the latter it suffices that the $n$-cycle $V-\llbracket E\rrbracket$ for some minimizing filling $V$ is a compactly supported cycle in the proper, connected, oriented manifold $M$. Note that $V$ is compactly supported by Theorem \ref{thm:wen_ex}. The last part of the corollary follows by Cheeger's inequality and the definition of Cheeger's constant \cite{Che70}.
\end{proof}

\vspace{0.2cm}

Another interesting application concerns the deformations of relative cycles as trea\-ted in \cite{BWY23}.
Indeed, under the additional assumptions of \cite[Theorem 5.1]{BWY23}, we can deduce the following improved mass bound for their linear deformation result.

We adapt the notions from \cite[Section 1]{BWY23}. We say that $X$ is \textit{$k$-Lipschitz connected} for some $k\geq 0$, abbreviated $(LC_k)$, if there is a $c\geq 1$ such that for any $l\leq k$ every $L$-Lipschitz map from the Euclidean $l$-sphere to $X$ extends to a $cL$-Lipschitz map on the $(l+1)$-ball. We say that $X$ satisfies Euclidean isoperimetric inequalities for some $k\geq 0$, abbreviated $(EI_k)$, if there exists a $C>0$ such that for every $m\in\{1,\dots ,k\}$ and every cycle $T\in \I_m(X)$ we have
\begin{equation}
    \label{eq:Euclineq}
\F_X(T)\leq C \M(T)^{\frac {m+1}m}\,.\end{equation}
In particular, for $k=0$, the condition is empty and thus trivially satisfied.

\begin{corollary}
    [Linear relative deformation]
    Let $X$ be a complete  $\mathrm{CAT}(0)$-space with finite asymptotic Nagata dimension and finite asymptotic rank $\nu$. Let $k+1\geq \nu$, and let $K\subseteq X$ be compact, $q$-quasiconvex, and either $(LC_k)$ or $(EI_k)$ for some $k\geq 0$. For every $S\in \I_{k+1,c}(X)$ with $\supp(\partial S)\subseteq K$ of finite Nagata dimension, there are $C>0$, $\bar S\in \I_{k+1,c}(X)$, and $W\in \I_{k+2,c}(X)$ satisfying $\partial W=\bar S-S$, $\partial \bar S=\partial S$, $\supp\bar S\subseteq K$, and \begin{equation}
        \M(\bar S)+\M(W)\leq C \M(S)\,,\label{eq:mass_deform}
    \end{equation} where $C$ depends only on $q$, $k$, the linear filling constant, the Nagata data of $\supp(\partial S)$, and the data of either $(LC_k)$ or $(EI_k)$.
\end{corollary}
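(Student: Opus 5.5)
The plan is to run the relative deformation of \cite[Theorem 5.1]{BWY23} and replace the one support-dependent filling estimate in it by the linear filling inequality of Theorem \ref{thm:2}.

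\emph{Step 1: push $S$ into $K$.} Since $\supp(\partial S)\subseteq K$ has finite Nagata dimension, Lemma \ref{lem:trans} applies at a fixed scale $s$. This gives a simplicial complex $\Sigma$ with maps $\psi:\supp(\partial S)\to\Sigma$ and $\phi:\Sigma\to X$. Using $(LC_k)$, or $(EI_k)$ together with the $q$-quasiconvexity of $K$, I would build the retraction-type maps of \cite{BWY23}. Concretely, $\phi$ is arranged to take values in $K$, skeleton by skeleton up to dimension $k$: vertices go to points of $K$, edges to curves in $K$ of length at most $q$ times the distance, and higher cells are filled by Lipschitz extension under $(LC_k)$ or by Euclidean fillings under $(EI_k)$. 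This yields a current $\bar S\in\I_{k+1,c}(X)$ with $\supp \bar S\subseteq K$, $\partial\bar S=\partial S$, and
\[
\M(\bar S)\leq C_2\M(S),
\]
where $C_2$ depends only on $q$, $k$, the Nagata data and the $(LC_k)$/$(EI_k)$ data. I would take this mass bound on $\bar S$ directly from \cite[Theorem 5.1]{BWY23}. The part of their argument that involves a linear structure or support-dependent constants concerns only the filling $W$, and that is what Step 2 replaces.

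\emph{Step 2: produce $W$.} The current $Z\coloneqq\bar S-S$ is a compactly supported $(k+1)$-cycle, because $\partial\bar S=\partial S$. Its mass satisfies $\M(Z)\leq(1+C_2)\M(S)$. Since $k+1\geq\nu$ and $k+1\geq1$, Theorem \ref{thm:2} applies in dimension $k+1$. It gives $W\in\I_{k+2,c}(X)$ with $\partial W=Z$ and $\M(W)\leq C_{\mathrm{lin}}(1+C_2)\M(S)$, where $C_{\mathrm{lin}}$ is the linear filling constant. Adding the two mass bounds gives \eqref{eq:mass_deform} with
\[
C=C_2+C_{\mathrm{lin}}(1+C_2),
\]
which depends only on the data listed in the statement.

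\emph{Main obstacle.} The hard part is Step 1: verifying that the construction of $\bar S$ in \cite{BWY23} has a mass bound linear in $\M(S)$ with a constant independent of $\diam K$. In \cite{BWY23} the support dependence enters through the homotopy or cone filling of $\bar S-S$, which is exactly what Theorem \ref{thm:2} replaces. The bound on $\M(\bar S)$ itself comes from the deformation Theorem \ref{thm:deform} on $\Sigma$ followed by the Lipschitz bounds for $\phi$, and these involve only scale-invariant constants once $s$ is fixed via the Nagata data.

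Two cases need separate handling. If $\supp S$ is far from $K$, the deformation of $S$ at scale $s$ can leave a remainder term $R$. I would control it through $\partial S$ using $\M(R)\lesssim s\M(\partial S)$. Bounding $\M(\partial S)$ linearly by $\M(S)$ is not available in general, so the fallback is to apply the deformation only to the portion of $S$ near $\partial S$.
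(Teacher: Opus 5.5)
Your proposal matches the paper's proof: it takes $\bar S$ from the first part of \cite[Theorem 5.1]{BWY23} as a black box, with $\partial\bar S=\partial S$, $\supp\bar S\subseteq K$ and $\M(\bar S)\le C_1\M(S)$, then fills the compactly supported $(k+1)$-cycle $\bar S-S$ by Theorem \ref{thm:2}, giving the same constant $C_1+(C_1+1)C_2$. Once that bound is cited, your skeleton-by-skeleton reconstruction of $\phi$ and the ``main obstacle'' paragraph about remainder terms are unnecessary. The only thing you leave implicit is the trivial case $\partial S=0$, which the paper handles separately by taking $\bar S=0$.
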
 

\begin{proof} If $\partial S=0$, we can take $\bar S=0$ and construct a filling $W$ of $-S$ by Theorem~\ref{thm:2} directly with the same constant in \eqref{eq:mass_deform} as in \eqref{eq:linfil}.
For the case $\partial S\neq 0$, apply the first part of \cite[Theorem 5.1]{BWY23} to $\supp (\partial S)\subseteq K\subseteq X$, which leads to an $\bar S\in \I_{k+1,c}(X)$ supported in $K$ such that $$\partial \bar S=\partial S\qquad \text{and}\qquad \M (\bar S)\leq C_1 \M(S)$$ for some $C_1>0$. Thus, Theorem \ref{thm:2} gives a $W\in \I_{k+2,c}(X)$ with $$\partial W=\bar S- S\qquad \text{and} \qquad \M(W)\leq  C_2 \M (\bar S-S)\leq (C_1+1)C_2\M(S)$$ for some constant $C_2>0$, which concludes the proof with $C=C_1+(C_1+1)C_2$.
\end{proof}

We emphasize that we do not make use of the last part of \cite[Theorem 5.1]{BWY23} and thus obtain an independent mass bound for $\M(W)$ without assuming $X$ being a Banach space. Further note that this bound does not depend on the support of $S$.

\appendix
\section{Thick-thin decomposition}\label{app}

The thick-thin decomposition is a versatile tool developed in \cite[Theorem 4.2]{Wen11a} and used in \cite[Theorem 7.2]{Wen11} to pass from currents of bounded support to currents of finite mass. We go one step further and start with compactly supported currents. 

Instead of repeating the thick-thin decomposition from \cite{Wen11a} in its full generality, we apply it in the next lemma, which records the consequences that we need for the proof of Lemma \ref{lem:promo}.

\begin{theorem}[Application of {\cite[Theorem 4.2]{Wen11a}}]\label{thm:wen_thick}
  Let $X$ be a complete $\mathrm{CAT}(0)$-space. Then there is a $\gamma\in (0,1)$ such that for every $T\in \I_k(X)$, $T\neq 0$, with $\partial T=0$ there are $R\in \I_k(X)$ and $T_j\in \I_k(X)$, $j\in \N$, such that 
  \begin{equation*}
   T=R+\sum_{j=1}^\infty T_j
\end{equation*}
and the following properties hold:

\begin{enumerate}
   \item[(i)] It holds that \begin{equation}\label{eq:boundary}
    \partial R=\partial T=0\qquad \text{and}\qquad \partial T_j=0\,,\quad j\in \N\,.
\end{equation} 
   \item[(ii)] There is a $k$-dependent constant $D_1>0$ such that for all $x\in \supp R$ and $0\leq  r\leq 5\min\{1/2,\M(T)^{-1/k}\} \M(R)^{1/k}$, it holds that \begin{equation}\label{eq:low_bdd}
        \|R\|(B_r(x))\geq D_1 \gamma r^k\,.
    \end{equation}
   \item[(iii)]For all $j\in \N$, we have the mass estimate \begin{equation}\label{eq:T_j_bdd}\M(T_j)\leq 
    \frac 32 
    \gamma\,.
    \end{equation}
   \item[(iv)]There is a $k$-dependent constant $D_2>0$ such that \begin{equation}
       \mathrm{diam}(\supp T_j)\leq D_2\gamma^{-\frac 1k}\M(T_j)^{\frac 1k}\,.\label{eq:diam}\end{equation}
   \item[(v)] Finally, we have the mass estimate \begin{equation}\label{eq:mass_sum}\M(R)+\frac 13 \sum_{j=1}^\infty\M(T_j)\leq \M(T)\,.
   \end{equation}
\end{enumerate}
\end{theorem}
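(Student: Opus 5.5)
The statement is an application of Wenger's thick-thin decomposition \cite[Theorem 4.2]{Wen11a}, so the plan is to specialize that theorem to a complete $\mathrm{CAT}(0)$-space and read off (i)--(v) by bookkeeping.

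First we check the hypotheses. By Theorem~\ref{thm:wen_coning}, $X$ satisfies cone-type inequalities \eqref{eq:cone_type} for all boundedly supported $m$-cycles, $1\leq m\leq k$. Combined with the Gromov--Wenger Euclidean isoperimetric inequality and the Euclidean lower density bound for minimizers, this gives the isoperimetric constants that \cite[Theorem 4.2]{Wen11a} needs. In particular, Wenger's theorem provides constants $D_1,D_2>0$ depending only on $k$; for $\mathrm{CAT}(0)$-spaces the cone-type constant $1/(k+1)$ is universal. We then fix $\gamma\in(0,1)$ small enough, depending only on $k$, for the ``thin'' threshold in Wenger's theorem to apply. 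Since the statement fixes $\gamma$ once and for all, we can choose it to be the value $\gamma$ admissible in that theorem.

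Next we apply the theorem to a nonzero cycle $T\in\I_k(X)$ at the scale dictated by the mass of $T$. The cutoff radius in Wenger's theorem is a multiple of $\M(T)^{1/k}$. Rescaling the metric by $\lambda=\min\{1/2,\M(T)^{-1/k}\}$ normalizes the problem to mass at most $1$ and radius at most $1/2$. The resulting factor $5\min\{1/2,\M(T)^{-1/k}\}\M(R)^{1/k}$ in (ii) comes from undoing this rescaling, because mass scales like $\lambda^k$ and radii like $\lambda$. Wenger's construction is iterative: at each stage it cuts off, along a slice $\partial(T\llcorner B_r(x))$ that the coarea inequality chooses, a piece $T_j$ around a point where the density drops below $\gamma$. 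It also caps each piece so that it is a cycle. The part that remains after countably many removals is the thick part $R$.

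The properties are then read off as follows.
\begin{itemize}
\item Property (i) holds because each $T_j$ is made into a cycle, so $\partial R=\partial T-\sum_j\partial T_j=0$.
\item Property (ii) is the thickness conclusion at the rescaled radius.
\item Property (iii) holds because each piece is cut at a point where the density is below $\gamma$; the added filling is controlled, which gives the factor $3/2$.
\item Property (iv) follows from the small-density ball radius estimate, which gives $\mathrm{diam}\leq D_2(\M(T_j)/\gamma)^{1/k}$.
\item Property (v) is Wenger's mass accounting. The slice cost is at most half the removed mass, so the total mass added is at most $\tfrac23\sum_j\M(T_j)$, which yields the factor $1/3$.
\end{itemize}
Convergence of $\sum_j T_j$ in mass follows from (v).

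The main obstacle is matching the normalization in \cite[Theorem 4.2]{Wen11a} to the stated constants, namely the factors $5$, $3/2$, $1/3$ and the $\min\{1/2,\cdot\}$ in (ii). This needs careful tracking of the rescaling and of Wenger's parameters. In particular, we must check that $\gamma$ can be taken independent of $T$, and that a radius bound at scale $\M(T)^{1/k}$ rather than $\M(R)^{1/k}$ is what the theorem delivers. If Wenger's version uses a slightly different radius, we would shrink $\gamma$ or $D_1$ to compensate.
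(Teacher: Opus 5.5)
Your overall route is the paper's: verify the hypotheses of Wenger's thick--thin theorem for a complete $\mathrm{CAT}(0)$-space, then read off (i)--(v). The hypothesis check you give is essentially right. For $k\geq 2$ one needs the Euclidean isoperimetric inequality for $(k-1)$-cycles, which follows from the cone-type inequality via \cite[Theorem 1.2]{Wen05}; for $k=1$ nothing is needed.

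However, the only real content of this statement is the choice of parameters in \cite[Theorem 4.2]{Wen11a}. That is exactly the step you leave open, and the mechanism you propose for it fails. The factor $\min\{1/2,\M(T)^{-1/k}\}$ cannot come from rescaling the metric:
\begin{itemize}
\item An inequality $\|R\|(B_r(x))\geq D_1\gamma r^k$ for $0\leq r\leq c\,\M(R)^{1/k}$ is invariant under $d\mapsto\delta d$, since masses scale like $\delta^k$ and radii like $\delta$. So undoing a rescaling can never insert a $\delta$ into the radius range of (ii).
\item An absolute bound $\M(T_j)\leq\frac32\gamma$ obtained in the rescaled metric becomes $\M(T_j)\leq\frac32\gamma\,\delta^{-k}$ in the original metric, which is weaker than (iii).
\end{itemize}
Your fallback of shrinking $\gamma$ is also unavailable. $\gamma$ enters (ii) as a lower bound and (iii) as an upper bound, and it is fixed by Wenger's theorem rather than tunable afterwards. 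Your slicing-and-capping heuristics for $3/2$ and $1/3$ do not replace identifying where these constants come from.

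The paper instead uses Wenger's theorem as a black box with free parameters. It takes $\alpha=0$ if $k=1$ and $\alpha=k$ if $k\geq 2$, and lets $\gamma\in(0,1)$ be the constant the theorem supplies. It then sets $\lambda=1/2$, $F(r)=\gamma r^k$ and $G(r)=r^{1/k}$. Crucially, it uses $\delta=\min\{1/2,\M(T)^{-1/k}\}$ directly as Wenger's threshold parameter, not as a scaling factor. With these choices:
\begin{itemize}
\item (i) is immediate from $\partial T=0$;
\item (ii) holds with $D_1=5^{-k-\alpha}/2$, using $\supp(\partial T)=\emptyset$;
\item (iii) follows because $\delta^k\M(T)\leq 1$;
\item (iv) holds with $D_2=4(4\cdot 5^{k+\alpha})^{1/k}$;
\item (v) follows by inserting $\lambda=1/2$.
\end{itemize}
Without identifying $\delta$ and $\lambda$ as parameters of Wenger's theorem, your proposal does not establish the stated normalizations.
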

Here $R$ is the ``thick'' part and $T_j$ the ``thin'' parts of the decomposition of $T$. The lemma uses $\partial T=0$ and the additional structure of a complete $\mathrm{CAT}(0)$-space.

\begin{proof}

 In the language of \cite[Theorem 4.2]{Wen11a}, we take $\alpha=0$ for $k=1$ and $\alpha=k$ for $k\geq 2$. The former case does not require any further prerequisites to be met. If $k\geq 2$, then Hadamard spaces satisfy a Euclidean-type filling inequality of the form \eqref{eq:Euclineq} for $m=k-1$ by \cite[Theorem 1.2]{Wen05}, which uses Theorem \ref{thm:wen_coning}.

Therefore, \cite[Theorem 4.2]{Wen11a} is applicable and provides the $\gamma\in (0,1)$. Let $T\in \I_k(X)$, $T\neq 0$, with $\partial T=0$. Set $\lambda=1/2$, $F(r)=\gamma r^k$, $G(r)=r^{1/k}$, and $\delta=\min\{1/2,\M(T)^{-1/k}\}$. Then \cite[Theorem 4.2]{Wen11a} provides $R$ and $T_j$ as stated; we verify the five consequences one by one. 

For (i) we just use $\partial T=0$. For (ii) we note that $\supp(\partial T)=\emptyset$ and that $D_1=5^{-k-\alpha}/2$. For (iii) we bound $\delta^k\M(T)\leq 1$ by definition of $\delta$. For (iv) we set 
$D_2=4(4\cdot 5^{k+\alpha})^{1/k}$. Finally, for (v) we just insert the definition of $\lambda$. Note that $D_1$ and $D_2$ depend on $k$ only as $\alpha$ is determined by $k$.  
\end{proof}

As a last input for the proof of the promotion lemma, we will need the following basic covering argument.
\begin{lemma}\label{lem:tot_bdd} Let $X$ be a metric space, and let $k\geq 1$. Let $B_r(x)$ denote the ball at $x\in X$ of radius $r>0$ and $\mu$ be a finite Borel measure on $X$. Assume there are $r_0,\mu_0>0$ such that \begin{equation}\label{eq:low_bdd_mu}
    \mu(B_r(x))\geq \mu_0 r^k
\end{equation} for every $x\in \supp \mu$ and $0<r\leq r_0$. Then $\supp \mu$ is totally bounded.
\end{lemma}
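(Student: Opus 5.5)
The plan is the standard packing argument. Fix $\varepsilon>0$; we must show that $\supp\mu$ is covered by finitely many balls of radius $\varepsilon$. Set $r\coloneqq\min\{\varepsilon/2,r_0\}>0$ and choose a maximal $2r$-separated subset $\{x_i\}_{i\in I}$ of $\supp\mu$, that is, $d(x_i,x_j)\geq 2r$ for $i\neq j$. Such a set exists by Zorn's lemma, or by a greedy choice once we know finite families are bounded in size. Maximality means every $y\in\supp\mu$ satisfies $d(y,x_i)<2r\leq\varepsilon$ for some $i$. Hence the balls $B_\varepsilon(x_i)$ cover $\supp\mu$, and it only remains to show that $I$ is finite.

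To bound $|I|$, note that the balls $B_r(x_i)$ are pairwise disjoint. If $y$ lay in two of them, the triangle inequality would give $d(x_i,x_j)<2r$, which is impossible. Since each $x_i\in\supp\mu$ and $0<r\leq r_0$, the hypothesis \eqref{eq:low_bdd_mu} gives $\mu(B_r(x_i))\geq\mu_0r^k$. Now take any finite subfamily $I'\subseteq I$. Disjointness and additivity of $\mu$ give
$$|I'|\,\mu_0 r^k\leq \sum_{i\in I'}\mu(B_r(x_i))=\mu\Big(\bigcup_{i\in I'}B_r(x_i)\Big)\leq\mu(X)<\infty.$$
Therefore $|I|\leq \mu(X)/(\mu_0r^k)$ is finite. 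This is the key step. It is also where finiteness of $\mu$ is essential, and where we need the lower bound to hold at every support point and not just $\mu$-almost everywhere.

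One minor subtlety is whether the balls should be open or closed, and whether the separation should be strict. Using open balls $B_r$ with separation $\geq 2r$ avoids any issue with boundary points. If the paper's $B_r$ denotes closed balls, use $r/2$ in place of $r$ in the mass bound. Another small point is avoiding Zorn: we can build the separated set greedily, and the mass bound forces the process to stop after at most $\mu(X)/(\mu_0 r^k)$ steps, at which point the set is maximal. If $\supp\mu=\emptyset$ there is nothing to prove. I expect no real obstacle beyond this bookkeeping.
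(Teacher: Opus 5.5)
Your proof is correct and is essentially the paper's packing argument. The paper argues by contradiction: it extracts an infinite $r$-separated sequence in $\supp\mu$ and uses the disjoint balls $B_{r/3}(x_i)$ together with the lower mass bound to force $\mu(X)=\infty$. You organize the same estimate around a maximal $2r$-separated set (with $r=\min\{\varepsilon/2,r_0\}$ absorbing the paper's separate case $r>r_0$), which also gives the explicit bound $\mu(X)/(\mu_0 r^k)$ on the number of covering balls.
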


\begin{proof}
    Let $0<r\leq r_0$. If $\supp \mu$ could not be covered by finitely many balls of radius $r$, then there are countably many $x_i\in \supp \mu$ satisfying $$d(x_i,x_j)\geq r\,,\qquad i\neq j\,.$$
    As the balls $B_{r/3}(x_i)$ are consequently pairwise disjoint, \eqref{eq:low_bdd_mu} implies $$\mu(X)\geq \sum_{i=1}^N \mu(B_{r/3}(x_i))\geq N\mu_0 \frac{r^k}{3^k}$$ for every $N\in \N$, which implies that $\mu(X)=\infty$ as $N\to \infty$, a contradiction. If $r>r_0$, any finite cover by balls of radius $r_0$ is also a cover by balls of radius $r$. Hence, $\supp \mu$ has a finite cover by balls of radius $r$, so it is totally bounded by definition.
    \end{proof}

\begin{proof}[Proof of Lemma \ref{lem:promo}] 
Let $T\in \I_k(X)$ be a cycle. We may assume that $\M(T)>0$ since otherwise $T=0$ and there is nothing to prove. Applying Theorem \ref{thm:wen_thick}, we obtain $R,T_j\in \I_k(X)$ and $D_1,D_2>0$ with the properties stated in the theorem.

We first show that $R$ has compact support. To this end, we can assume that $\M(R)>0$ since otherwise $R$ is already compactly supported. Thus, we know that the quantity $r_R\coloneqq 5\min\{1/2,\M(T)^{-1/k}\} \M(R)^{1/k}$ is positive. Then $\supp R$ is totally bounded by Lemma \ref{lem:tot_bdd} with $\mu=\|R\|$, $\mu_0=D_1\gamma$, and $r_0=r_R$ using \eqref{eq:low_bdd}. Since $\supp R$ is also closed in the complete metric space $X$, it is therefore compact. As $R$ is compactly supported, we can apply the assumption \eqref{eq:lin_fill_ass} to find a $U\in \I_{k+1,c}(X)$ with $\partial U=R$ and $$\M(U)\leq C \M(R)\leq C\M(T)\,,$$ where we used \eqref{eq:mass_sum} in the last step.

After having filled $R$, we fill the thin parts $T_j$ as well. Using Theorem~\ref{thm:wen_coning} with \eqref{eq:boundary} and \eqref{eq:diam}, there exists a filling $V_j\in \I_{k+1}(X)$ of $T_j$ such that \begin{equation*}
    \M(V_j)\leq \frac{D_2\gamma^{-\frac 1k}}{k+1} \M(T_j)^{1+\frac 1k}\leq\left( \frac 32 \right)^{\frac 1k}\frac{D_2 }{k+1}\M(T_j)\eqqcolon D_3 \M(T_j)\,,
\end{equation*} where we used \eqref{eq:T_j_bdd} in the last step.
Summing over $j$ and applying \eqref{eq:mass_sum} yields \begin{equation}
    \sum_{j=1}^\infty \M(V_j)\leq 3 D_3\M(T) \quad \text{and} \quad \sum_{j=1}^\infty \M(\partial V_j)=\sum_{j=1}^\infty \M(T_j) \leq 3 \M(T)\label{eq:series}
\end{equation} 
Since the space of currents of finite mass is complete with respect to $\M$ (see \cite[p.~15]{BWY23}, for instance), the partial sums $\sum_j V_j$ of the series in \eqref{eq:series} converge in the mass norm to a current $V_\infty$ of finite mass and $$\partial V_\infty=\sum_{j=1}^\infty T_j=T-R\,.$$ As the partial sums are integral currents and their masses and boundary masses are uniformly bounded by \eqref{eq:series}, we obtain that $V_\infty\in \I_{k+1}(X)$ with $\M(V_\infty)\leq 3 D_3 \M(T)$ by the closure theorem \cite[Theorem 8.5]{AK00}. Finally, setting $$V\coloneqq U+V_{\infty}\,,$$ we obtain $\partial V=T$ and $$\M(V)\leq (C+3D_3)\M(T)\,,$$ which proves the claim \eqref{eq:lin_fill_claim} with $C'=C+3D_3$.
\end{proof}



\begin{thebibliography}{DLPS25}

\bibitem[Alm86]{Alm86}
F.~J.~Almgren, Jr.,
\emph{Optimal isoperimetric inequalities},
Indiana Univ. Math. J. \textbf{35} (1986), no.~3, 451--547.


\bibitem[AK00]{AK00}
L.~Ambrosio and B.~Kirchheim,
\emph{Currents in metric spaces},
Acta Math. \textbf{185} (2000), 1--80.

\bibitem[Ass82]{Ass82}
P.~Assouad,
\emph{Sur la distance de Nagata},
C. R. Acad. Sci. Paris S\'er. I Math. \textbf{294} (1982), no.~1, 31--34.

\bibitem[BE99]{BE99}
H.~Bahn and P.~Ehrlich,
\emph{A Brunn--Minkowski type theorem on the Minkowski spacetime},
Canad. J. Math. \textbf{51} (1999), no.~3, 449--469.

\bibitem[Bal95]{Bal95}
W.~Ballmann,
\emph{Lectures on Spaces of Nonpositive Curvature},
DMV Seminar \textbf{25}, Birkh\"auser, 1995.


\bibitem[BWY23]{BWY23}
G.~Basso, S.~Wenger, and R.~Young,
\emph{Undistorted fillings in subsets of metric spaces},
Adv. Math. \textbf{423} (2023), 109024.

\bibitem[BH99]{BH99}
M.~R.~Bridson and A.~Haefliger,
\emph{Metric Spaces of Non-Positive Curvature},
Grundlehren der mathematischen Wissenschaften \textbf{319}, Springer, 1999.

\bibitem[Bro81]{Bro81}
R.~Brooks,
\emph{The fundamental group and the spectrum of the Laplacian},
Comment. Math. Helv. \textbf{56} (1981), 581--598.

\bibitem[BBI01]{BBI01}
D.~Burago, Y.~Burago, and S.~Ivanov,
\emph{A Course in Metric Geometry},
Graduate Studies in Mathematics \textbf{33}, American Mathematical Society, 2001.

\bibitem[Bus55]{Bus55}
H.~Busemann,
\emph{The Geometry of Geodesics},
Academic Press, New York, 1955.

\bibitem[Bus82]{Bus82}
P.~Buser,
\emph{A note on the isoperimetric constant},
Ann. Sci. \'Ecole Norm. Sup. (4) \textbf{15} (1982), 213--230.

\bibitem[CS07]{CS07}
L.~Caffarelli and L.~Silvestre,
\emph{An extension problem related to the fractional Laplacian},
Commun. Partial Differ. Equ. \textbf{32} (2007),
no.~8, 1245--1260.

\bibitem[CM17]{CM17}
F.~Cavalletti and A.~Mondino,
\emph{Sharp and rigid isoperimetric inequalities in metric-measure spaces with lower Ricci curvature bounds},
Invent. Math. \textbf{208} (2017), 803--849.

\bibitem[CM25]{CM25}
F.~Cavalletti and A.~Mondino,
\emph{A sharp isoperimetric-type inequality for Lorentzian spaces satisfying timelike Ricci lower bounds},
arXiv:2401.03949 (2024).

\bibitem[Che70]{Che70}
J.~Cheeger,
\emph{A lower bound for the smallest eigenvalue of the Laplacian}, In: Problems in Analysis (Papers Dedicated to Salomon Bochner, 1969), Princeton Univ. Press, Princeton, NJ, 1970, 195--199.

\bibitem[CMW23]{CMW23}
C.~Connell, D.~B.~McReynolds, and S.~Wang,
\emph{The natural flow and the critical exponent},
arXiv:2302.12665 (2023).

\bibitem[DeL16]{DeL16}
C.~De Lellis,
\emph{The size of the singular set of area-minimizing currents}, Surv. Differ. Geom. \textbf{21} (2016), 1--83.

\bibitem[Dod84]{Dod84}
J.~Dodziuk,
\emph{Difference equations, isoperimetric inequality and transience of certain random walks},
Trans. Amer. Math. Soc. \textbf{284} (1984), 787--794.

\bibitem[DS07]{DS07}
A.~N.~Dranishnikov and J.~Smith,
\emph{On asymptotic Assouad--Nagata dimension},
Topology Appl. \textbf{154} (2007), no.~4, 934--952.

\bibitem[DLPS25]{DLPS25}
C.~Dru\c{t}u, U.~Lang, P.~Papasoglu, and S.~Stadler,
\emph{Minimal tetrahedra and an isoperimetric gap theorem in non-positive curvature},
arXiv:2502.03389 (2025).

\bibitem[Fed69]{Fed69}
H.~Federer,
\emph{Geometric Measure Theory},
Grundlehren der mathematischen Wissenschaften \textbf{153},
Springer, Berlin, 1969.

\bibitem[FF60]{FF60}
H.~Federer and W.~H.~Fleming,
\emph{Normal and integral currents},
Ann. of Math. \textbf{72} (1960), no.~2, 458--520.

\bibitem[GL23]{GL23}
T.~Goldhirsch and U.~Lang,
\emph{Characterizations of higher rank hyperbolicity},
Math. Z. \textbf{305} (2023), no.~13.

\bibitem[Gro83]{Gro83}
M.~Gromov,
\emph{Filling Riemannian manifolds},
J. Differ. Geom. \textbf{18} (1983), 1--147.

\bibitem[Gro87]{Gro87}
M.~Gromov,
\emph{Hyperbolic groups},
In: S.~Gersten (ed.), Essays in Group Theory,
Math. Sci. Res. Inst. Publ. \textbf{8}, Springer, 1987, 75--263.

\bibitem[Gro93]{Gro93}
M.~Gromov,
\emph{Asymptotic invariants of infinite groups},
In: A.~Niblo and M.~A.~Roller (eds.), Geometric Group Theory,
London Math. Soc. Lecture Note Ser. \textbf{182},
Cambridge Univ. Press, 1993, 1--295.

\bibitem[Isl25]{Isl25}
H.~Isleifsson,
\emph{Linear isoperimetric inequality for homogeneous Hadamard manifolds},
J. Topol. Anal. \textbf{17} (2025), no.~3, 761--768.

\bibitem[JL22]{JL22}
M.~J{\o}rgensen and U.~Lang,
\emph{Geodesic spaces of low Nagata dimension},
Ann. Fenn. Math. \textbf{47} (2022), no.~1, 83--88.

\bibitem[Kle99]{Kle99}
B.~Kleiner,
\emph{The local structure of length spaces with curvature bounded above},
Math. Z. \textbf{231} (1999), 409--456.

\bibitem[KL20]{KL20}
B.~Kleiner and U.~Lang,
\emph{Higher rank hyperbolicity},
Invent. Math. \textbf{221} (2020), no.~2, 597--664.


\bibitem[Lan00]{Lan00}
U.~Lang,
\emph{Higher-dimensional linear isoperimetric inequalities in hyperbolic groups}, Int. Math. Res. Not. (2000), no.~13, 709--717.

\bibitem[Lan11]{Lan11}
U.~Lang,
\emph{Local currents in metric spaces},
J. Geom. Anal. \textbf{21} (2011), 683--742.


\bibitem[LS05]{LS05}
U.~Lang and T.~Schlichenmaier,
\emph{Nagata dimension, quasisymmetric embeddings, and Lipschitz extensions},
Int. Math. Res. Not. (2005), no.~58, 3625--3655.

\bibitem[LSU26]{LSU26}
U.~Lang, S.~Stadler, and D.~Urech,
\emph{Isoperimetric inequalities in Hadamard spaces of asymptotic rank two},
Adv. Math. \textbf{502} (2026), 111123.

\bibitem[LW08]{LW08}
U.~Lang and S.~Wenger,
\emph{Isoperimetric inequalities and the asymptotic geometry of Hadamard spaces}, Enseign. Math. (2) \textbf{54} (2008), no.~1--2, 135--137.

\bibitem[LP25]{LP25}
C.~Lange and J.~W.~Peteranderl,
\emph{Quantitative Lorentzian isoperimetric inequalities in conical Minkowski spacetimes},
arXiv:2510.26755 (2025).

\bibitem[Leu14]{Leu14}
E.~Leuzinger,
\emph{Optimal higher-dimensional Dehn functions for some CAT(0) lattices},
Groups Geom. Dyn. \textbf{8} (2014), no.~2, 441--466. 

\bibitem[Mil65]{Mil65}
J.~Milnor,
\emph{Lectures on the $h$-Cobordism Theorem},
Notes by L.~Siebenmann and J.~Sondow,
Princeton University Press, Princeton, NJ, 1965.

\bibitem[Oss78]{Oss78}
R.~Osserman,
\emph{The isoperimetric inequality},
Bull. Amer. Math. Soc. \textbf{84} (1978), 1182--1238.

\bibitem[Sch20]{Sch20}
F.~Schulze,
\emph{Optimal isoperimetric inequalities for surfaces in any codimension in Cartan--Hadamard manifolds},
Geom. Funct. Anal. \textbf{30} (2020), no.~1, 255--288.

\bibitem[Sta21]{Sta21}
S.~Stadler,
\emph{The structure of minimal surfaces in $\operatorname{CAT}(0)$ spaces},
J. Eur. Math. Soc. \textbf{23} (2021), no.~11, 3521--3554.

\bibitem[Wen05]{Wen05}
S.~Wenger,
\emph{Isoperimetric inequalities of Euclidean type in metric spaces},
Geom. Funct. Anal. \textbf{15} (2005), 534--554.

\bibitem[Wen06]{Wen06}
S.~Wenger,
\emph{Filling invariants at infinity and the Euclidean rank of Hadamard spaces},
Int. Math. Res. Not. (2006), 83090.

\bibitem[Wen08]{Wen08}
S.~Wenger,
\emph{Gromov hyperbolic spaces and the sharp isoperimetric constant},
Invent. Math. \textbf{171} (2008), 227--255.

\bibitem[Wen11]{Wen11}
S.~Wenger,
\emph{The asymptotic rank of metric spaces},
Comment. Math. Helv. \textbf{86} (2011), 247--275.

\bibitem[Wen11a]{Wen11a}
S.~Wenger,
\emph{Compactness for manifolds and integral currents with bounded diameter and volume},
Calc. Var. Partial Differential Equations \textbf{40} (2011),
no.~3--4, 423--448.



\bibitem[Yau75]{Yau75}
S.-T.~Yau,
\emph{Isoperimetric constants and the first eigenvalue of a compact Riemannian manifold},
Ann. Sci. \'Ecole Norm. Sup. \textbf{8} (1975), no.~4, 487--507.
\end{thebibliography}
\end{document}